\tikzstyle{legend_general}=[rectangle, rounded corners, thin,
\DeclareMathOperator{\im}{im}
\newtheorem{theorem}{Theorem}[section]
\newtheorem{claim}[theorem]{Claim}
\newtheorem{lemma}[theorem]{Lemma}
\theoremstyle{definition}
\title{Transversals and bipancyclicity in bipartite graph families}
\author{Peter Bradshaw}
\address{Department of Mathematics, Simon Fraser University, Vancouver, Canada}
\email{pabradsh@sfu.ca}
\thanks{The author of this work has been partially supported by a supervisor's grant from the Natural Sciences and Engineering Research Council of Canada (NSERC)}
\begin{document}
\begin{abstract}
A bipartite graph is called \emph{bipancyclic} if it contains cycles of every even length from four up to the number of vertices in the graph.
A theorem of Schmeichel and Mitchem states that for $n \geq 4$, every balanced bipartite graph on $2n$ vertices in which each vertex in one color class has degree greater than $\frac{n}{2}$ and each vertex in the other color class has degree at least $\frac{n}{2}$ is bipancyclic. We prove a generalization of this theorem in the setting of graph transversals. Namely, we show that given a family $\mathcal{G}$ of $2n$ bipartite graphs on a common set $X$ of $2n$ vertices with a common balanced bipartition, if each graph of $\mathcal G$ has minimum degree greater than $\frac{n}{2}$ in one color class and minimum degree at least $\frac{n}{2}$ in the other color class, then there exists a cycle on $X$ of each even length $4 \leq \ell \leq 2n$ that uses at most one edge from each graph of $\mathcal G$. We also show that given a family $\mathcal G$ of $n$ bipartite graphs on a common set $X$ of $2n$ vertices meeting the same degree conditions, there exists a perfect matching on $X$ that uses exactly one edge from each graph of $\mathcal G$.
\end{abstract}
\maketitle
\section{Introduction}
One of the oldest problems in graph theory is that of determining conditions that guarantee a cycle of a given length in a graph. The classic Dirac's theorem \cite{Dirac} gives such a condition, stating that if every vertex of a graph $G$ on $n$ vertices has a degree of at least $\frac{n}{2}$, then $G$ contains a Hamiltonian cycle. A later result of Bondy \cite{Bondy} shows that the degree condition of Dirac's theorem guarantees more than just a Hamiltonian cycle; Bondy proves that under these same degree conditions, $G$ contains a cycle of every length $\ell$, for $3 \leq \ell \leq n$, except when $G \cong K_{n/2,n/2}$. A graph on $n$ vertices that contains a cycle of every length $\ell$, for $3 \leq \ell \leq n$, is called \emph{pancyclic}. Therefore, Bondy's result can be concisely summarized by the statement that every graph on $n$ vertices with minimum degree at least $\frac{n}{2}$ is pancyclic, except for the complete balanced bipartite graph.

The problem of finding conditions that guarantee cycles of given lengths in graphs has been considered not only for general graphs, but also specifically for bipartite graphs. Interestingly, the results of Dirac and Bondy mentioned above have close analogues in the bipartite setting. In particular, Moon and Moser \cite{Moon} prove the following theorem for bipartite graphs, which closely resembles Dirac's theorem. Here, we use $d(v)$ to denote the degree of a vertex $v$.
\begin{theorem}
\label{thmMoon}
Let $n \geq 2$. Let $G$ be a balanced bipartite graph on $2n$ vertices with a red and a blue color class. Suppose that for each red vertex $v \in V(G)$, $d(v) > \frac{n}{2}$. Suppose further that for each blue vertex $w \in V(G)$, $d(w) \geq \frac{n}{2}$. Then $G$ contains a Hamiltonian cycle.
\end{theorem}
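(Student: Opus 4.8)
The plan is to deduce Theorem~\ref{thmMoon} from the bipartite analogue of the Bondy--Chv\'atal closure, using that the asymmetric degree hypothesis is exactly strong enough to force an Ore-type condition. Write $A$ for the red class and $B$ for the blue class, so that $|A|=|B|=n$. For any red vertex $v$ and blue vertex $w$ we have $d(v)+d(w) > \tfrac{n}{2}+\tfrac{n}{2}=n$, and since degrees are integers this gives $d(v)+d(w)\ge n+1$. Thus every pair of vertices in opposite color classes, adjacent or not, satisfies the bipartite Ore bound.

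The key step I would establish is a closure lemma: if $u\in A$ and $w\in B$ are nonadjacent with $d(u)+d(w)\ge n+1$, then $G$ is Hamiltonian if and only if $G+uw$ is. One direction is trivial. For the other, suppose $G+uw$ has a Hamiltonian cycle $H$; if $uw\notin E(H)$ we are done, so assume $uw\in E(H)$ and delete it to obtain a Hamiltonian path $u=x_1,x_2,\dots,x_{2n}=w$. Since the path alternates between $A$ and $B$, we have $x_i\in A$ exactly when $i$ is odd, and I would search for an odd index $j$ with $ux_{j+1}\in E(G)$ and $x_jw\in E(G)$: such a $j$ yields the Hamiltonian cycle $x_1x_2\cdots x_j\,x_{2n}x_{2n-1}\cdots x_{j+1}\,x_1$ of $G$. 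The candidate set coming from $u$, namely $\{\,j\text{ odd}: x_{j+1}\in N_G(u)\,\}$, has size $d(u)$ and is contained in $\{1,3,\dots,2n-3\}$ (here the hypothesis $uw\notin E(G)$ is used), while the candidate set coming from $w$, namely $\{\,j\text{ odd}: x_j\in N_G(w)\,\}$, has size $d(w)$ and is contained in $\{3,5,\dots,2n-1\}$. Both lie inside the $n$-element set $\{1,3,\dots,2n-1\}$, so if they were disjoint we would get $d(u)+d(w)\le n$, contradicting the hypothesis; hence a common index exists.

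Granting the lemma, the theorem follows quickly. Starting from $G$ and repeatedly adding a missing edge between opposite color classes, the Ore bound from the first paragraph persists throughout the process (degrees only increase), so we may add every missing edge and reach $K_{n,n}$, which is Hamiltonian for $n\ge 2$; the closure lemma, applied in reverse along the sequence of additions, then carries Hamiltonicity back down to $G$. (One should separately note that when $n=2$ the degree hypotheses already force $G=K_{2,2}$, and be mildly careful with the parity bookkeeping in the rotation step.) The step I expect to be the main obstacle is the closure lemma itself, specifically pinning down the index ranges so that the pigeonhole count forces the ``crossing'' pair of edges $ux_{j+1}$ and $x_jw$; the rest is routine.
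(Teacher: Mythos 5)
Your proof is correct: the Ore-type bound $d(v)+d(w)\ge n+1$ for every red--blue pair does follow from the integrality of degrees, the index bookkeeping in the rotation step is right (the set coming from $u$ avoids $j=2n-1$ and the set coming from $w$ avoids $j=1$, both live in the $n$-element set of odd indices, and disjointness would force $d(u)+d(w)\le n$), and the bipartite closure then carries Hamiltonicity down from $K_{n,n}$. Note, however, that the paper does not prove this statement at all --- it is quoted as a theorem of Moon and Moser --- so there is no in-paper proof to compare against. Your argument is essentially the classical P\'osa/Ore rotation method that Moon and Moser themselves use, and it is the same template the paper deploys in its own Claim 2.3 (edge-maximal counterexample, rotate along a Hamiltonian path, pigeonhole on two index sets inside the odd positions); packaging it as a Bondy--Chv\'atal-style closure lemma is a clean and slightly more modular way to present the same idea.
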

Additionally, not only Dirac's theorem has a bipartite analogue, but Bondy's strengthened version of Dirac's theorem also has a close analogue for bipartite graphs. In particular, the following result of Schmeichel and Mitchem \cite{Schmeichel}, which generalizes Theorem \ref{thmMoon} for $n \geq 4$, closely resembles the result of Bondy mentioned above.
\begin{theorem}
\label{thmSchmeichel}
Let $n \geq 4$. Let $G$ be a balanced bipartite graph on $2n$ vertices with a red and a blue color class. Suppose that for each red vertex $v \in V(G)$, $d(v) > \frac{n}{2}$. Suppose further that for each blue vertex $w \in V(G)$, $d(w) \geq \frac{n}{2}$. Then $G$ contains a cycle of every even length $\ell$, for $4 \leq \ell \leq 2n$.
\end{theorem}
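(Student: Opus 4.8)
The plan is to deduce Theorem~\ref{thmSchmeichel} from Theorem~\ref{thmMoon} by a downward induction on cycle length. Since $n \ge 4 \ge 2$, Theorem~\ref{thmMoon} furnishes a Hamiltonian cycle $C = u_0 u_1 \cdots u_{2n-1} u_0$, two-colored so that $u_i$ is red exactly when $i$ is even; thus $G$ already has a cycle of length $2n$. It then suffices to establish the \emph{descent claim}: if $G$ contains a cycle of even length $m$ with $6 \le m \le 2n$, then $G$ contains a cycle of length $m-2$. Iterating the claim starting from $C$ produces cycles of lengths $2n, 2n-2, \dots, 6, 4$, which is precisely the conclusion sought. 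Alternatively one may carve off the shortest lengths $\ell \in \{4,6\}$ and handle them directly --- for $\ell = 4$ by noting that a $C_4$-free balanced bipartite graph on $2n$ vertices has, by an extremal (Zarankiewicz-type) bound, far fewer than the $\approx n^2/2$ edges forced by the degree hypotheses once $n \ge 4$ --- and prove the descent claim only for $m \ge 8$.

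For the descent claim, fix a cycle $C' = w_0 w_1 \cdots w_{m-1} w_0$ of length $m$, two-colored so that $w_i$ is red iff $i$ is even. The simplest mechanism is a \emph{short chord}: if $w_i w_{i+3} \in E(G)$ for some $i$ (indices modulo $m$) --- a legal edge, since $w_i$ and $w_{i+3}$ lie in different color classes --- then $w_i w_{i+3} w_{i+4} \cdots w_{i-1} w_i$ is a cycle of length $m-2$ and we are done. So from now on assume $w_i \not\sim w_{i+3}$ for every $i$; the task is to show that this, together with the degree hypotheses, is impossible.

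I would treat the Hamiltonian case $m = 2n$ and the non-Hamiltonian case $m < 2n$ separately. When $m = 2n$ there is no vertex outside $C'$, so the argument must be internal: the hypothesis that $w_i \not\sim w_{i+3}$ for all $i$ should be contradicted by a Bondy-type crossing-chords argument, pairing each red $w_{2j}$ with the non-adjacent blue vertex $w_{2j+3}$, bounding $d(w_{2j}) + d(w_{2j+3})$ from above by analyzing which pairs of chords incident to $w_{2j}$ and to $w_{2j+3}$ can coexist without producing a cycle of length $2n-2$, and then summing over $j$ (the pairs $\{w_{2j},w_{2j+3}\}$ form a perfect matching between the color classes) to contradict the lower bound $e(G) > n^2/2$ coming from the minimum-degree hypotheses. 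When $m < 2n$ there is a vertex $x \notin V(C')$, which typically gives more room: one expects to combine a reroute of $C'$ through $x$ (lengthening one arc by two, using two neighbors of $x$ on $C'$ in suitable positions) with a short chord that shortens another arc by two, and the over-constrained structure forced on the neighborhoods of the $w_i$ by the assumption $w_i \not\sim w_{i+3}$, together with the degree conditions, should make such a pair of moves available, again yielding a cycle of length $m-2$.

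The main obstacle is that the degree hypothesis is only $\approx n/2$, so every counting step above is essentially tight and leaves almost no room to spare; pushing the crossing-chords bookkeeping through in the Hamiltonian case, locating usable external neighbors in the non-Hamiltonian case when $m$ is small relative to $n$, and cleanly disposing of the smallest lengths all have to be done by hand, and it is here that the hypothesis $n \ge 4$ and the slight asymmetry between the red bound ($> n/2$) and the blue bound ($\ge n/2$) are spent. I expect essentially all of the case analysis, and hence the bulk of the work, to be concentrated in this final step.
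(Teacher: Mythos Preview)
Your descent strategy has a structural problem that the paper's approach sidesteps. The step ``given a cycle of length $m$, find one of length $m-2$'' leans on short chords $w_iw_{i+3}$ and on rerouting through an outside vertex $x$; both of these use the degree hypotheses, but those hypotheses are calibrated to $n$, not to $m$. Once $m$ drops well below $2n$ (say $m\le n$), a vertex $w_i$ on $C'$ may place \emph{all} of its $\ge n/2$ neighbours outside $C'$, so the no-short-chord assumption costs you nothing, and an outside vertex $x$ may likewise have no neighbour on $C'$ at all. At that point neither your internal crossing-chords count nor your external reroute has any grip, and the descent stalls. You flag this as ``the main obstacle'' and hope it can be handled ``by hand,'' but for generic small $m$ there is no local fix: the information simply isn't there.

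The paper (which deduces Theorem~\ref{thmSchmeichel} from its transversal generalisation, Theorem~\ref{thmPancyclic}) avoids this by refusing to induct downward. It first secures a single cycle of length $2n-2$, then works \emph{only} on that nearly-Hamiltonian cycle, with the two leftover vertices $x,y$ available outside it. For each target length $\ell$ it finds an $\ell$-cycle directly inside this fixed picture, using a Bondy-style pairing on the $(2n-2)$-cycle (Claim~\ref{claimAnyColor}) to show every cycle edge is ``recolourable,'' then using $x$ and $y$ together with a short group-theoretic shifting lemma (Lemma~\ref{lemmaNum}) on $\mathbb{Z}_{2n-2}$ to locate chords at the right distance. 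Because the base cycle always has $n-1$ vertices of each colour, the $n/2$ degree bounds bite at every step, regardless of how small $\ell$ is. That is the key idea your outline is missing: anchor the whole argument to one large cycle rather than letting the working cycle shrink.
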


 A balanced bipartite graph on $2n$ vertices that contains a cycle of every even length $\ell$, for $4 \leq \ell \leq 2n$, is called \emph{bipancyclic}. Therefore, Theorem \ref{thmSchmeichel} may be summarized by stating that a balanced bipartite graph meeting certain minimum degree conditions is bipancyclic.
In Theorem \ref{thmSchmeichel}, the condition that $n \geq 4$ cannot be weakened, as when $n = 3$, the cycle on six vertices meets the minimum degree conditions of the theorem but contains no cycle of length four.
We note that the authors of \cite{Moon} and \cite{Schmeichel} also give stronger versions of Theorems \ref{thmMoon} and \ref{thmSchmeichel}, but we only consider the weaker versions stated here in order to highlight the fact that these results for bipartite graphs closely resemble the results of Dirac and Bondy for general graphs.

In this paper, we will also seek graph conditions that guarantee cycles of given lengths, but we will consider this question in the context of \emph{graph transversals}, a special type of \emph{transversals}, which are defined as follows. Suppose we have a universal set $\mathcal{E}$ and a family $\mathcal{F} = \{F_i: i \in I\}$ of subsets of $\mathcal{E}$, where $I$ is some index. Then a transversal on $\mathcal{F}$ is a set $\{f_i: i \in I\}$ such that $f_i \in F_i$ for each $i \in I$, and $f_i \neq f_j$ for $i \neq j$. In other words, a transversal is a system of distinct representatives for a family of sets. Transversals frequently appear in infinitary combinatorics under several similar definitions (see, for example, \cite{Erdos1968} and \cite{Aharoni1983}), and transversals are also extensively studied in the context of Latin squares (see \cite{Wanless} for a survey). 

The notion of a transversal may be applied to a family of graphs as follows. Suppose we have a set $X$ of $n$ vertices, and suppose that $\mathcal{G} = \{G_1, \dots, G_s\}$ is a family of $s$ graphs, each with $X$ as its set of vertices.
Then, we define a transversal on $\mathcal{G}$ to be a a set of $s$ edges $E \subseteq {X \choose 2}$ for which there exists a bijective function $\phi:E \rightarrow [s]$ such that for all $e \in E$, it holds that $e \in E(G_{\phi(e)})$. If $|E| < s$ and there exists an injective function $\phi:E \rightarrow [s]$ satisfying the same property, then we say that $E$ is a \emph{partial transversal} on $\mathcal{G}$. 
Informally, if we imagine that each graph $G_i \in \mathcal G$ has its edges colored with a single distinct color, then a transversal on $\mathcal G$ is a set of edges in which each color appears exactly once, and a partial transversal on $\mathcal G$ is a set of edges in which each color appears at most once.

Recently, certain classical results about cycles of specified lengths in graphs have been extended to the setting of graph transversals. For example, Aharoni et al.~\cite{Aharoni} give a transversal analogue of Mantel's theorem, showing that given a graph set $\mathcal{G} = \{G_1, G_2, G_3\}$ on a common set of $n$ vertices in which each graph $G_i$ has roughly at least $ 0.2557n^2$ edges, there must exist a partial transversal on $\mathcal{G}$ isomorphic to $K_3$. Additionally, Joos and Kim \cite{Joos} show that given a graph family $\mathcal{G} = \{G_1, \dots, G_n\}$ on a common vertex set of $n$ vertices, if the minimum degree of each graph $G_i$ is at least $\frac{n}{2}$, then $\mathcal{G}$ has a transversal isomorphic to a Hamiltonian cycle, which gives a generalization of Dirac's theorem. Cheng, Wang and Zhao \cite{Cheng} furthermore show that given a graph family $\mathcal{G} = \{G_1, \dots, G_n\}$ on a common vertex set $X$ of $n$ vertices, if the minimum degree of each graph $G_i$ is at least $ \frac{n+1}{2}$, then $\mathcal{G}$ contains a partial transversal of length $\ell$ for every length $3 \leq \ell \leq n-1$, which gives an approximate extension of Bondy's pancyclicity condition.

In this paper, we will focus on graph transversals over bipartite graph families. First, we will prove the following 
theorem, which gives a generalization of Theorem \ref{thmMoon} in the setting of graph transversals.

\begin{theorem}
\label{mainThm}
Let $n \geq 2$. Let $X$ be a set of $n$ red vertices and $n$ blue vertices, and let $\mathcal{G} = \{G_1, \dots, G_{2n}\}$ be a set of $2n$ bipartite graphs on the vertex set $X$. Suppose 
\begin{itemize}
\item For each $G_i \in \mathcal{G}$, the red vertices of $X$ and the blue vertices of $X$ both form independent sets in $G_i$;
\item For each $G_i \in \mathcal{G}$ and for each red vertex $v \in V(G_i)$, $d(v) > \frac{n}{2}$;
\item For each $G_i \in \mathcal{G}$ and for each blue vertex $w \in V(G_i)$, $d(w) \geq \frac{n}{2}$.
\end{itemize}
Then $\mathcal{G}$ contains a transversal isomorphic to a Hamiltonian cycle.
\end{theorem}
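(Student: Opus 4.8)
The plan is to use the rotation--extension method of Pósa, adapted to the rainbow setting. Call a path or cycle in $\mathcal G$ \emph{rainbow} if its edges lie in pairwise distinct members of $\mathcal G$; a rainbow path or cycle with $k$ edges is then a partial transversal of size $k$ and leaves $2n-k$ graphs of $\mathcal G$ unused, while a rainbow cycle on all $2n$ vertices necessarily uses every graph. I would obtain such a Hamilton cycle in two steps: first produce a rainbow Hamilton path, then close it up.

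First I would fix a rainbow path $P = v_0 v_1 \cdots v_k$ of maximum length and argue that $V(P) = X$. Suppose not, and pick $u \in X \setminus V(P)$. If $k$ is small --- small enough that in each colour class fewer than $n/2$ (resp.\ at most $n/2$) vertices lie on $P$ --- then some endpoint of $P$ has, in one of the at least $2n-k$ unused graphs, a neighbour in the opposite class outside $V(P)$, so we extend $P$ and contradict maximality; this disposes of all $k$ up to roughly $n$. If $k$ is large and the endpoints of $P$ lie in opposite colour classes, I would run the crossing argument underlying the bipartite Dirac/Moon--Moser theorems while keeping track of colours: using that every endpoint has more than, resp.\ at least, $n/2$ neighbours in \emph{each} of the (at least two) unused graphs, a pigeonhole count along $P$ produces indices turning $P$ into a rainbow cycle $C$ with $k+1$ edges on the vertex set $V(P)$. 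Since $k$ is large, fewer than $n/2$ vertices of the class opposite to $u$ lie outside $V(C)$, so $u$ --- having at least $n/2$ neighbours in that class in every graph --- has a neighbour on $C$ in \emph{every} graph, in particular in a graph not used by $C$; deleting a suitable edge of $C$ and attaching $u$ through that graph yields a rainbow path with $k+1$ edges, a contradiction. The remaining possibility is that the maximum rainbow path has even length, its endpoints in the same class; here there is no rainbow cycle on $V(P)$, but an essentially parallel analysis --- rotations at an endpoint, exploiting the surplus of unused graphs --- shows that a non-spanning such $P$ can always be lengthened.

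Once a rainbow Hamilton path $P = v_0 \cdots v_{2n-1}$ is in hand, exactly one graph $G^\star$ of $\mathcal G$ is unused and the endpoints $v_0, v_{2n-1}$ lie in opposite classes. If $v_0 v_{2n-1} \in E(G^\star)$ we are done; otherwise I would rotate. The subtlety is that the only graph available for a rotation at an endpoint of a rainbow Hamilton path is its current unused graph, and the rotation replaces this graph by that of the deleted edge. I would therefore perform the Pósa rotation analysis on the augmented state $(Q, H)$ consisting of the current rainbow Hamilton path $Q$ and its unused graph $H$: since $H$ too satisfies the degree hypotheses, the endpoint of $Q$ has more than, resp.\ at least, $n/2$ neighbours in $H$, and the rotation count shows that some reachable state has its path-endpoints adjacent in its unused graph, which closes up the rainbow Hamilton cycle. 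For the smallest values of $n$, where these counts are tight, I would instead note that the degree conditions force the graphs $G_i$ to be extremely dense (for $n = 2$, every $G_i \cong K_{2,2}$) and exhibit a rainbow Hamilton cycle directly.

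I expect the main obstacle to be the combination of the \emph{tight regime}, in which a near-spanning rainbow path or cycle leaves only one or two graphs unused and so rotations and extensions have almost no freedom, with the \emph{coupled dynamics} of the closing step, where the unused graph is forced and shifts in lockstep with the rotating endpoint. The way through both is that in the tight regime the degree hypotheses act globally --- a vertex missing from a near-spanning structure must have a neighbour on that structure in \emph{every} graph --- so the few unused graphs can always be put to work; and the asymmetry of the degree bound, strict on the red side, is precisely what keeps the decisive counts from failing by one vertex.
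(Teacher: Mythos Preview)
Your outline identifies the right themes—rotation, crossing, and the tension created by the unused graph shifting under each rotation—but the two places where you write ``essentially parallel analysis'' and ``the rotation count shows'' are precisely where the substance of the proof lies, and neither is routine.

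The serious gap is Step 2. With a rainbow Hamilton path you have exactly one unused graph, and a single rotation at $v_0$ both changes the endpoint and replaces that unused graph by the colour of the deleted edge. So the state space you propose to run P\'osa on is a directed graph whose arcs are determined by $2n$ \emph{different} graphs, one per state, and the usual P\'osa counting (which relies on all rotations living in one fixed graph) does not apply. What you would need is exactly the auxiliary-digraph machinery of Joos and Kim: an aggregate object that records, for each edge $v_iv_{i+1}$ of a rainbow structure, where $v_i$ can go in the graph $G_{\phi(v_iv_{i+1})}$, together with an in-degree pigeonhole argument. But that digraph is naturally defined on a \emph{cycle}, because deleting any edge of a cycle yields a path with a freed colour; on a Hamilton path only the two terminal edges behave this way. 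This is why the paper does \emph{not} attempt to close a Hamilton path directly. Instead it first manufactures a partial transversal consisting of a $(2n-2)$-cycle together with a disjoint $K_2$ on the two remaining vertices $x,y$ (this takes several preparatory claims), and then runs the Joos--Kim digraph on that cycle. The two external vertices, the edge $xy$, and the one entirely free graph give just enough leverage for the in-degree analysis (their Claim~2.4 bounding $d^-(y)$) and a two-case finish. Your proposal skips this structural detour, and without it the closing step has no argument behind it.

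A smaller but related gap is the even-length case of Step 1. Rotations preserve the length of the path and hence the parity of its endpoints, so no sequence of rotations will put the endpoints in opposite classes or produce a cycle on $V(P)$; ``essentially parallel'' to the odd case it is not. One can escape this case, but again it requires tracking how the set of unused graphs evolves under rotation, and you have not indicated how.

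In short: finding a rainbow Hamilton path is the easier half (and the paper effectively does this en route), but converting it to a rainbow Hamilton cycle is where the new idea is needed, and the paper's $(2n-2)$-cycle-plus-$K_2$ intermediate structure is not an accident of exposition but the mechanism that makes the digraph argument go through.
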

If we let $G_1 = G_2 = \dots = G_{2n}$ in Theorem \ref{mainThm}, then any Hamiltonian cycle in $G_1$ is a Hamiltonian transversal on $\mathcal G$, so Theorem \ref{mainThm} is a generalization of Theorem \ref{thmMoon}. After proving Theorem \ref{mainThm}, we will also be able to prove the following stronger theorem, which generalizes Theorem \ref{thmSchmeichel} in the setting of graph transversals.

\begin{theorem}
\label{thmPancyclic}
Let $n \geq 4$. Let $X$ be a set of $n$ red vertices and $n$ blue vertices, and let $\mathcal{G} = \{G_1, \dots, G_{2n}\}$ be a set of $2n$ bipartite graphs on the vertex set $X$. Suppose
\begin{itemize}
\item For each $G_i \in \mathcal{G}$, the red vertices of $X$ and the blue vertices of $X$ both form independent sets in $G_i$;
\item For each $G_i \in \mathcal{G}$ and for each red vertex $v \in V(G_i)$, $d(v) > \frac{n}{2}$;
\item For each $G_i \in \mathcal{G}$ and for each blue vertex $w \in V(G_i)$, $d(w) \geq \frac{n}{2}$.
\end{itemize}
Then $\mathcal{G}$ contains a (partial) transversal isomorphic to a cycle of length $\ell$ for each even length $\ell$, $4 \leq \ell \leq 2n$.
\end{theorem}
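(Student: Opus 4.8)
The plan is to deduce Theorem~\ref{thmPancyclic} from Theorem~\ref{mainThm} by surgery on a Hamiltonian transversal. The case $\ell = 2n$ is exactly Theorem~\ref{mainThm}, so fix an even $\ell$ with $4 \le \ell \le 2n - 2$. Theorem~\ref{mainThm} gives a Hamiltonian transversal $C = v_1 v_2 \cdots v_{2n} v_1$, and after relabelling the graphs we may assume $v_i v_{i+1} \in E(G_i)$ for every $i$ (indices taken modulo $2n$); since each colour class is independent in each $G_i$, the colours alternate along $C$. The organising principle is that a transversal isomorphic to $C_\ell$ uses only $\ell$ of the $2n$ graphs, so $2n - \ell$ graphs stay free; this slack is plentiful for small $\ell$ and vanishes as $\ell \to 2n$, and I would treat the two regimes differently.

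For small $\ell$ I would build the cycle greedily from below. A short direct argument from the degree hypotheses (using $n \ge 4$) shows that $\mathcal{G}$ contains a transversal isomorphic to $C_4$: take a red vertex $v$, then $w' \in N_{G_4}(v)$, then $v' \in N_{G_3}(w')\setminus\{v\}$, then $w \in N_{G_1}(v) \cap N_{G_2}(v')$, the last set being nonempty because $|N_{G_1}(v)| + |N_{G_2}(v')| > n$ while both lie inside the $n$-element blue class (and a little extra care makes $w \neq w'$). Then, given a partial transversal $D$ isomorphic to $C_{2m}$ with $2m < \ell$ and $2m \le n$, I would pick an edge $u u'$ of $D$ and splice in a path of length three $u\,x\,y\,u'$ with $x,y \notin V(D)$ through three graphs not used by the remaining edges of $D$; this is possible because each of $u, u'$ has more than $n/2 \ge m$ neighbours of the appropriate colour in every graph, while $V(D)$ contains only $m$ vertices of each colour and $2n - (2m-1) \ge 3$ graphs remain available. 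Iterating produces transversals isomorphic to $C_\ell$ for every even $\ell$ up to $n$.

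For larger $\ell$ I would instead shrink the Hamiltonian transversal. If $D$ is a partial transversal cycle of even length $2m$ and $v_a v_b$ is a chord of $D$ contained in a graph $G_k$ that is not used along one of the two $v_a$--$v_b$ arcs of $D$, then discarding that arc in favour of the chord yields a transversal cycle whose length is one more than the length of the remaining arc; since $2n - 2m + 3$ graphs are available to carry such a chord, a counting argument against the minimum-degree conditions supplies the needed chord whenever $\ell$ is not too close to $2n$, letting one descend from $C$ through all the intermediate lengths. The lengths closest to $2n$, above all $\ell = 2n-2$, are the crux, and I expect them to be the main obstacle: here essentially no graph is free, so any surgery on $C$ must re-use the colours of the edges it removes, which severely restricts the admissible chords, and a naive single-chord cut provably need not work. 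For these lengths I would mirror the delicate chord analysis of Schmeichel and Mitchem: assuming $\mathcal{G}$ has no transversal isomorphic to $C_\ell$, deduce that for every index $a$ a bounded family of short chords at $v_a$ (such as $\{v_a,v_{a+3}\}$ and $\{v_a,v_{a-3}\}$, together with the further chords used in the surgery) is absent from each of the $O(1)$ graphs that a would-be $C_\ell$ through the relevant arc could use, sum these forbidden non-edges around $C$, and contradict the facts that in every $G_i$ each red vertex has fewer than $n/2$ non-neighbours and each blue vertex at most $n/2$ non-neighbours --- the hypothesis $n \ge 4$ entering exactly as in Theorem~\ref{thmSchmeichel}. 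Carrying this bookkeeping while simultaneously tracking which of the $2n$ colours each cycle edge carries is the delicate point, and it may well require re-invoking the argument behind Theorem~\ref{mainThm} on a sub-configuration of $C$.
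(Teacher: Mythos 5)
Your outline defers or under-justifies exactly the steps that carry the weight of the proof. The most concrete gap is the splicing step for small $\ell$: to replace an edge $uu'$ of a partial transversal cycle $D$ by a path $u\,x\,y\,u'$ with $x,y \notin V(D)$, you need three edges in three \emph{specific} unused graphs, and your counting only shows that each of $u,u'$ individually has a neighbour outside $V(D)$; it does not produce the middle edge $xy$ in a prescribed spare graph, nor a common choice of $x$ (or $y$) compatible with both of its required edges. Since the degree hypotheses are tight (two blue vertices of degree exactly $\frac{n}{2}$ may have disjoint red neighbourhoods, as in the disjoint union of two copies of $K_{n/2,n/2}$), there is no free common-neighbour argument for the pair of blue vertices that arises here. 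This is precisely the difficulty the paper confronts head-on: it proves (Claim 3.7) that the leftover edge $xy$ is a \emph{wildcard edge} lying in every $G_i$, and only then can it close cycles through $x$ and $y$. Similarly, your descent for large $\ell$ asserts that a counting argument ``supplies the needed chord whenever $\ell$ is not too close to $2n$,'' but no such count is given, and the same tightness obstructs a naive version. Finally, you correctly identify $\ell = 2n-2$ as the crux but then only promise to ``mirror'' Schmeichel--Mitchem; in the transversal setting the bookkeeping of which colours remain free \emph{is} the theorem, and the paper spends two full claims on it (including a separate ad hoc wildcard-edge analysis for $n=4$), partly by re-running the machinery of Theorem 1.3 (Claim 2.1: a $(2n-2)$-cycle plus a $K_2$ as a partial transversal).

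For comparison, the paper's route is not a length-by-length induction at all. It first extracts from the Hamiltonian transversal a partial transversal isomorphic to a $(2n-2)$-cycle $C$, leaving two spare vertices $x,y$ and two spare colours. It then proves a Bondy-type pairing claim showing every edge of $C$ lies in one of the two spare graphs, so the two missing colours can be relocated to any position on $C$; proves $xy$ is in every graph; and finishes all lengths $4 \le \ell \le 2n-4$ \emph{simultaneously} with a lemma in $\mathbb{Z}_{2n-2}$ stating that if $|(A+d)\cup(A-d)| = |A|$ then $A = A+2d$, applied to the neighbourhood of $x$ at shifts $\ell-2$ and $\ell-3$. If you want to salvage your plan, the pieces you must supply are (i) a replacement for the splice that controls the colour of the middle edge, and (ii) the full $\ell = 2n-2$ argument, which cannot be outsourced to the non-transversal theorem.
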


Again, if we let $G_1 = G_2 = \dots = G_{2n}$ in Theorem \ref{thmPancyclic}, then any cycle of length $\ell$ in $G_1$ is a partial transversal on $\mathcal G$, so Theorem \ref{thmPancyclic} is a generalization of Theorem \ref{thmSchmeichel}.
Theorem \ref{thmPancyclic} is also a generalization of Theorem \ref{mainThm} except for small values of $n$, and hence these results may be summarized by saying that if a graph family $\mathcal{G}$ satisfies the conditions of Theorems \ref{mainThm} and \ref{thmPancyclic}, then $\mathcal{G}$ is bipancyclic in the sense of graph transversals. The main tool used in our proofs is an auxiliary digraph technique introduced by Joos and Kim in \cite{Joos}.

We note that Theorems \ref{mainThm} and \ref{thmPancyclic} are best possible in that we may not relax the minimum degree condition. Indeed, for any even $n$, by letting each graph $G_i \in \mathcal{G}$ be an identical copy of a disjoint union of two complete graphs $K_{n/2,n/2}$, we find a graph family with minimum degree $\frac{n}{2}$ and no Hamiltonian transversal. A similar example shows that the minimum degree condition for odd $n$ may not be relaxed either.

Lastly, we will consider graph transversals isomorphic to perfect matchings, which we denote as \emph{perfect matching transversals}. Joos and Kim consider perfect matching transversals in \cite{Joos} and establish minimum degree conditions that guarantee perfect matching transversals in general graphs. Our last theorem shows that under the degree conditions of Theorems \ref{mainThm} and \ref{thmPancyclic}, a family of $n$ balanced bipartite graphs on a common set of $2n$ vertices contains a perfect matching transversal.
\begin{theorem}
\label{thmPM}
Let $n \geq 1$. Let $X$ be a set of $n$ red vertices and $n$ blue vertices, and let $\mathcal{G} = \{G_1, \dots, G_{n}\}$ be a set of $n$ bipartite graphs on the vertex set $X$. Suppose 
\begin{itemize}
\item For each $G_i \in \mathcal{G}$, the red vertices of $X$ and the blue vertices of $X$ both form independent sets in $G_i$;
\item For each $G_i \in \mathcal{G}$ and for each red vertex $v \in V(G_i)$, $d(v) > \frac{n}{2}$;
\item For each $G_i \in \mathcal{G}$ and for each blue vertex $w \in V(G_i)$, $d(w) \geq \frac{n}{2}$.
\end{itemize}
Then $\mathcal{G}$ contains a perfect matching transversal.
\end{theorem}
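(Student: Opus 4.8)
The plan is to argue by contradiction, replacing ``maximum rainbow structure'' by ``maximum partial transversal matching'' and running the auxiliary digraph technique of Joos and Kim. First I would dispose of the idea of a naive induction on $n$: deleting a matched pair of vertices can drop a degree in one colour class to just below the threshold required on $2(n-1)$ vertices, and the hypotheses are too tight to recover (the extremal examples, two disjoint copies of $K_{n/2,n/2}$, sit exactly at the boundary), so induction by vertex deletion is not available.

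So, suppose $\mathcal G$ has no perfect matching transversal and let $M=\{e_1,\dots,e_k\}$ be a partial transversal matching of maximum size, with $e_j\in E(G_{c_j})$ and $c_1,\dots,c_k$ distinct; then $k<n$. Let $R_0,B_0$ be the red and blue vertices missed by $M$ and let $L_0=[n]\setminus\{c_1,\dots,c_k\}$, so $|R_0|=|B_0|=|L_0|=n-k\ge 1$; note each $G_i$ contains an edge (indeed a perfect matching, by Theorem \ref{thmMoon} when $n\ge 2$), so $k\ge 1$. I would then build an auxiliary digraph $D$ on $\{1,\dots,k\}\cup\{\mathbf s,\mathbf t\}$ recording the local moves that keep $M$ a partial transversal matching of size $k$ while changing which colours are free: a \emph{recolouring} of a matched edge $e_j$ to a currently free colour (which frees $c_j$), and a \emph{rotation} that re-matches a matched blue vertex along an alternating path (which frees a colour and makes a matched red vertex available). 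Concretely one puts an arc $\mathbf s\to j$ when $e_j\in E(G_\ell)$ for some $\ell\in L_0$ or $rb_j\in E(G_\ell)$ for some $r\in R_0,\ell\in L_0$; an arc $i\to j$ when $e_j\in E(G_{c_i})$, or $rb_j\in E(G_{c_i})$ for some $r\in R_0$, or $r_ib_j\in E(G_{c_i})$; an arc $j\to\mathbf t$ when $rb\in E(G_{c_j})$ for some $r\in R_0,b\in B_0$ or $r_jb\in E(G_{c_j})$ for some $b\in B_0$; and an arc $\mathbf s\to\mathbf t$ when $rb\in E(G_\ell)$ for some $r\in R_0,b\in B_0,\ell\in L_0$. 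A directed $\mathbf s$--$\mathbf t$ path then unwinds into a partial transversal matching of size $k+1$ (each step frees exactly the colour used for the next new edge, and the last step attaches a free colour to a new edge between an unmatched red and an unmatched blue), contradicting the maximality of $M$; hence $\mathbf t$ is unreachable.

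Let $S\subseteq\{1,\dots,k\}$ be the set of vertices reachable from $\mathbf s$. Then $S$ is closed under out-arcs, and reading off the absent arcs gives $N_{G_\ell}(r)\subseteq B_S:=\{b_j:j\in S\}$ for every $r\in R_0,\ \ell\in L_0$, as well as $N_{G_{c_i}}(r)\subseteq B_S$ for every $r\in R_0,\ i\in S$, and $N_{G_{c_i}}(r_i)\subseteq B_S$ for every $i\in S$. Since every red vertex has degree greater than $n/2$, this forces $|S|=|B_S|>n/2$. The remaining step is to set this against the blue degree condition: inside the graphs $G_{c_i}$ with $i\in S$ (and $G_\ell$ with $\ell\in L_0$), each unmatched blue vertex has degree at least $n/2$, but its neighbourhood is confined to matched red vertices and, once the closure of $S$ is used to kill the remaining edges, to the at most $n-|S|<n/2$ red vertices outside $R_S:=\{r_i:i\in S\}$ — a contradiction. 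This is where both hypotheses $d(v)>n/2$ (red) and $d(w)\ge n/2$ (blue) are genuinely used.

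I expect the main obstacle to be making the previous two steps precise. On one hand, the digraph must be designed so that \emph{every} directed $\mathbf s$--$\mathbf t$ path really does produce a larger partial transversal matching; this needs globally consistent colour bookkeeping, and since a recoloured pair keeps its red vertex matched while a rotated pair ejects it, the two move types can be composed only in restricted ways — one likely has to split each pair into two states in $D$, and possibly allow rotations rooted at either colour class to cope with the asymmetric thresholds. On the other hand, one must check that the closure of $S$ forces exactly enough absent edges that the blue degree bound collapses; for $n\le 3$ this last implication (and the case $|S|=k$) should be handled by hand, while for larger $n$ it is the technical heart of the argument.
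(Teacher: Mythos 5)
Your strategy---contradiction via a maximum partial transversal matching of size $k<n$ together with an auxiliary reachability digraph on the colour indices---is genuinely different from the paper's. The paper instead takes an \emph{edge-maximal} counterexample, so that the starting partial transversal matching automatically has size $n-1$; with exactly one free red vertex, one free blue vertex and one free index, the whole argument reduces to two or three explicit rotations and a short degree count in the Joos--Kim digraph. Your general-$k$ version has to pay for the extra generality, and as written it does not close. The red-side deduction ($N_{G_\ell}(r)\subseteq B_S$ for $r\in R_0$, $\ell\in L_0$, hence $|S|>n/2$) is fine \emph{granted} the closure properties of $S$, but those closure properties rest on the claim that every directed $\mathbf s$--$\mathbf t$ path yields an augmentation, and that claim fails for the digraph you define: the arc $i\to j$ justified by $r_ib_j\in E(G_{c_i})$ is only usable when the move realizing the preceding arc into $i$ actually ejected $r_i$ from the matching (a rotation, not a recolouring), and two different arcs justified by ``$rb_j\in E(G_{c_i})$ for some $r\in R_0$'' may demand the same unmatched red vertex twice. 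You flag the first of these yourself, but until the two-state refinement is carried out, ``$\mathbf t$ is unreachable'' is not a consequence of maximality, so $S$ is not yet known to be closed.

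The more serious gap is the final contradiction. Even granting the closure of $S$, the absence of the arc $j\to\mathbf t$ for $j\in S$ only tells you that for $b\in B_0$ the neighbourhood $N_{G_{c_j}}(b)$ avoids $R_0$ and the \emph{single} vertex $r_j$---one excluded matched red vertex per graph. It does not confine $N_{G_{c_i}}(b)$, for a fixed $i$, to the red vertices outside $R_S$, which is what your last step asserts; with only $r_i$ and $R_0$ excluded, $|N_{G_{c_i}}(b)|$ can still be as large as $k-1$, and no conflict with $d(b)\geq n/2$ arises. To make the blue side collapse you would need arcs certifying that $r_ib\in E(G)$ with $i\in S$, $b\in B_0$ and $G$ a ``freeable'' colour forces an augmentation; but which colours remain freeable after reaching $i$ depends on the whole path (in particular, when $|L_0|=1$ the unique free index may already have been spent at the first step), which is precisely the circularity the two-state refinement must break. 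Until both points are repaired, the proposal is a plan rather than a proof; note that the paper's edge-maximality trick sidesteps all of this bookkeeping by forcing $|R_0|=|B_0|=|L_0|=1$ from the outset.
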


Theorem \ref{thmPM} is equivalent to a special case of a matching theorem for $r$-partite $r$-uniform hypergraphs by Aharoni, Georgakopoulos, and Spr\"{u}ssel in \cite{AharoniRGraphs}, for the case $r = 3$. We nevertheless include a proof of Theorem \ref{thmPM} in order to demonstrate the power of Joos and Kim's auxiliary digraph from \cite{Joos} technique for investigating graph transversals. We will also give a construction showing that the degree condition of Theorem \ref{thmPM} is, in a sense, best possible.

\section{A Hamiltonian transversal: Proof of Theorem \ref{mainThm}}
This section will be dedicated to proving Theorem \ref{mainThm}. When $n = 2$, the theorem is trivial; thus we will assume throughout the proof that $n \geq 3$. We will let $X$ and $\mathcal{G}$ be defined as in Theorem \ref{mainThm}. We define a \emph{Hamiltonian transversal} on $\mathcal G$ as a transversal on $\mathcal G$ isomorphic to a Hamiltonian cycle. 

Throughout this section, we will let $X$ have a blue vertex set $\{p_1, \dots, p_n\}$ and a red vertex set $\{q_1, \dots, q_n\}$. We will assume throughout this section that $\mathcal G$ does not contain a Hamiltonian transversal, and we will arrive at a contradiction. This strategy will allow us to write certain steps of the proof more concisely. The first goal in our proof will be to establish the following claim.
\begin{claim}
\label{claim2n2}
$\mathcal{G}$ contains a partial transversal isomorphic to the disjoint union of a cycle of length $2n - 2$ and a $K_2$.
\end{claim}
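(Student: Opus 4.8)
The plan is to build the desired structure by starting from a near-spanning transversal cycle and then fixing it up using the auxiliary digraph technique of Joos and Kim. Concretely, I would first show that $\mathcal{G}$ contains a partial transversal isomorphic to a cycle of length $2n-2$; call such a cycle $C$ and let $\{x, y\}$ be the two vertices of $X$ missed by $C$, one red and one blue (if both missed vertices had the same color, $C$ would be an unbalanced bipartite cycle, which is impossible, so $x$ and $y$ must have different colors). To get the initial long cycle, I would run essentially the Joos--Kim argument for Hamiltonicity but with a reservoir of two vertices set aside, or alternatively take a longest transversal cycle and argue via a rotation/extension argument that, if it has length less than $2n-2$, the Moon--Moser-type degree conditions force an absorbing move that lengthens it. The degree hypotheses (red degrees strictly above $n/2$, blue degrees at least $n/2$) are exactly the bipartite Dirac threshold, and the asymmetry is what makes the counting in the rotation step go through, just as in the proof of Theorem~\ref{thmMoon}.

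Once $C$ has length $2n-2$ with $\{x,y\}$ its complement, the remaining task is to find a single transversal edge, using a color not yet used on $C$, that joins $x$ to $y$. There are $2n - (2n-2) = 2$ unused colors, so there are at least two graphs $G_i, G_j \in \mathcal{G}$ none of whose edges appear on $C$. Since $x$ and $y$ have different colors, the edge $xy$ is a legal (non-monochromatic-class) edge in each bipartite graph, and I want to show that $xy \in E(G_i) \cup E(G_j)$ for some such choice — or, failing that, to reroute $C$ so that a different pair of vertices is left uncovered, a pair that \emph{is} adjacent in one of the leftover graphs. The cleanest way to organize this is: if $xy$ is an edge of some graph whose color is free, we are done immediately; otherwise, for the two free colors $i, j$, both $G_i$ and $G_j$ fail to contain $xy$, and then I would use the high degrees of $x$ and $y$ in $G_i$ (and $G_j$) to find a neighbor $z$ of $x$ on $C$ such that swapping the $C$-edge at $z$ for the edge $xz$ and re-closing the cycle produces a $(2n-2)$-cycle missing $\{z, \text{something adjacent to } y\}$, iterating until the two missed vertices are adjacent in a free-colored graph. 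This is a standard absorption/rotation step, and the slack in the degree bound ($>n/2$ on the red side) gives the pigeonhole room needed to find such a $z$ while respecting the color constraints on the at most one edge we are allowed to borrow from each $G_k$.

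I expect the main obstacle to be the bookkeeping of colors during the rotation step: each time we reroute $C$ we delete one edge (freeing its color) and add one edge (consuming a color), so we must ensure we never run out of usable colors and never reuse a color, all while keeping the cycle balanced bipartite and of length exactly $2n-2$. A secondary subtlety is the boundary behavior for small $n$ (the proof has already reduced to $n \geq 3$), where the degree thresholds are tight and the rotation graph may be nearly complete bipartite; I would handle those cases by a direct argument, possibly invoking that a transversal of $K_{n,n}$-like families is easy to produce by Hall's theorem. Throughout, the engine is the Joos--Kim auxiliary digraph: vertices are the colors/graphs, and arcs encode ``edge $e$ of color $a$ can be replaced by an edge of color $b$'' moves, so that a suitable directed path or cycle in this digraph corresponds to exactly the sequence of swaps that transforms a long transversal cycle into the claimed cycle-plus-$K_2$ configuration.
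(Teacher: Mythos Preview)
Your high-level plan---first produce a partial transversal isomorphic to a $(2n-2)$-cycle $C$, then manufacture the extra $K_2$ on the two leftover vertices---matches the paper's structure. But the crucial middle step, the ``rotation/absorption'' you propose when $xy$ is not an edge of either free graph, does not work as you describe it, and this is where the paper does something you are missing.

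The difficulty is bipartiteness. You write that you would ``swap the $C$-edge at $z$ for the edge $xz$ and re-close the cycle'' to obtain a new $(2n-2)$-cycle missing a different pair of vertices. But removing one edge of $C$ at $z$ and adding $xz$ produces a path, not a cycle; and you cannot re-close by joining the two neighbours of a single discarded vertex, since in a bipartite cycle those two neighbours lie in the same colour class and are never adjacent. Any genuine vertex-swap therefore has to absorb and expel vertices in pairs of opposite colour, which forces you to bring \emph{both} $x$ and $y$ into the cycle at once---but then you have a Hamiltonian structure, not a $(2n-2)$-cycle, and you are no closer to isolating a $K_2$. Your iterated-rotation scheme, and the colour bookkeeping you worry about, never gets off the ground.

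The paper sidesteps this by passing through a \emph{Hamiltonian path} partial transversal. It first proves (Claim~\ref{lemmaHamPath}) that any Hamiltonian path partial transversal can be converted directly into the desired $(2n-2)$-cycle plus $K_2$: from a path $(q_1,p_2,q_2,\dots,q_n,p_1)$ one finds, by a single pigeonhole on the two free colours, a crossing pair of chords that closes off a $(2n-2)$-cycle while leaving the edge $q_1p_2$ as the stranded $K_2$. Then, starting from the $(2n-2)$-cycle $C$ with $p_n,q_n$ outside and $xy\notin E(G_{m_1})\cup E(G_{m_2})$, the degree conditions immediately give adjacent vertices $a,b$ on $C$ with $q_n a\in E(G_{m_1})$ and $p_n b\in E(G_{m_2})$, producing a Hamiltonian path partial transversal from $p_n$ to $q_n$, to which Claim~\ref{lemmaHamPath} applies. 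So the missing idea is: do not try to keep a $(2n-2)$-cycle throughout; instead, let the structure temporarily become a spanning path, and recover the cycle-plus-edge from that.

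A smaller point: the auxiliary digraph you describe at the end (vertices indexed by colours, arcs encoding colour-swap moves) is not the Joos--Kim digraph used in this paper. Their digraph has the vertices of $X$ as its vertex set, with an arc $v_i\to v_j$ recording that $v_iv_j$ lies in the graph $G_{\phi(v_iv_{i+1})}$; it is deployed only \emph{after} Claim~\ref{claim2n2} is established, not in its proof.
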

In order to prove Claim \ref{claim2n2}, we will first prove two auxiliary claims. The first of these auxiliary claims shows that Claim \ref{claim2n2} holds whenever $\mathcal G$ contains a partial transversal isomorphic to a Hamiltonian path.

\begin{claim}
If $\mathcal{G}$ contains a partial transversal isomorphic to a Hamiltonian path, then $\mathcal{G}$ contains a partial transversal isomorphic to the disjoint union of a cycle of length $2n - 2$ and a $K_2$.
\label{lemmaHamPath}
\end{claim}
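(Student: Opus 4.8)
The plan is to start with a partial transversal isomorphic to a Hamiltonian path, say on vertices $x_1 x_2 \cdots x_{2n}$ in order, using $2n-1$ edges with distinct colors from $\mathcal{G}$; let $c$ be the one color of $\mathcal{G}$ not used. Since the path alternates colors, one endpoint is blue and the other is red; without loss of generality say $x_1 = p_1$ is blue and $x_{2n} = q_1$ is red. The goal is to delete one edge incident to an endpoint and add one edge (of a new color, or reusing $c$) to close up a cycle of length $2n-2$ together with a disjoint $K_2$ on the two leftover vertices.

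First I would look at the endpoint $x_1$. In the graph $G_c$ (the unused color), $x_1 = p_1$ is blue, so $d_{G_c}(p_1) \geq \tfrac{n}{2}$; its neighbors in $G_c$ are red vertices, i.e. vertices of the form $x_{2i}$ on the path. I want to find an edge $p_1 x_{2i} \in E(G_c)$ with $i$ small — specifically I'd like $x_1 x_3$ or, failing that, some short chord from $x_1$ — so that removing the path edges near the start and inserting this $G_c$-edge yields a $(2n-2)$-cycle plus a $K_2$ on $\{x_1 x_2\}$ or similar. Concretely, if $p_1 = x_1$ has a $G_c$-neighbor at $x_4$, then $x_1 x_4 x_5 \cdots x_{2n}$ closed by the edge $x_{2n} x_1$... — but wait, that edge $x_{2n}x_1$ would itself need a color; the cleaner move is: the Hamiltonian path already contains the colored edge $x_1 x_2$ and $x_2 x_3$ and $x_3 x_4$; replacing the two edges $x_1x_2, x_3x_4$ is wasteful. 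The efficient version: find $x_1 x_4 \in E(G_c)$, then the cycle $x_4 x_5 \cdots x_{2n} x_1 x_4$? That is not a valid cycle since $x_1$ appears once — I need the cycle $x_1 x_4 x_5 \cdots x_{2n} x_1$, which uses the path edges $x_4x_5, \dots, x_{2n-1}x_{2n}$ (all distinct colors, none equal to $c$), plus the two new edges $x_1 x_4 \in E(G_c)$ and $x_{2n} x_1$. The edge $x_{2n} x_1$ joins red $q_1$ to blue $p_1$ and still needs an unused color — but the colors freed up by discarding path edges $x_1x_2, x_2x_3, x_3x_4$ are three colors, so by a simple defect/Hall argument (or directly, since $x_{2n}x_1$ lies in at least $2n - \text{something}$ graphs and we only forbid a bounded number) one of those freed colors works. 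The disjoint $K_2$ is then on $\{x_2, x_3\}$, using the path edge $x_2 x_3$, whose color is among the freed ones and distinct from the color chosen for $x_{2n}x_1$.

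The main obstacle, and where I'd spend the most care, is handling the case where the desired short chord $x_1 x_4 \in E(G_c)$ (or the analogous chord at the other endpoint) fails to exist, and more generally ensuring that when we free up three colors and need to reassign two edges ($x_{2n}x_1$ and the $K_2$ edge), the color classes are rich enough: each such edge is a red–blue pair and lies in the edge set of a graph $G_i$ unless $G_i$ happens to omit it, and the degree conditions ($d > n/2$ on red, $\geq n/2$ on blue) bound how often a fixed pair can be a non-edge only weakly, so I should instead argue via counting over the $2n$ graphs that the number of colors $i$ for which $x_{2n}x_1 \notin E(G_i)$ is small relative to $2n$, or—more robustly—set up the now-standard Joos–Kim auxiliary digraph on the color set to route the two replacement edges and invoke its connectivity. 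If no short chord at either endpoint exists, I would rotate the Hamiltonian path (Pósa-style rotations, tracking colors) to produce a new Hamiltonian path whose endpoint does admit such a chord in $G_c$, or directly close the original path into a Hamiltonian transversal—contradicting our standing assumption that $\mathcal{G}$ has no Hamiltonian transversal—so in every branch we reach either the claimed structure or a contradiction.
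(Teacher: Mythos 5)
There is a genuine gap at exactly the point you flag as the main obstacle, and the patch you sketch does not work. Your construction needs the edge $x_{2n}x_1$ joining the two path endpoints to lie in one of the (at most three) graphs whose colors you freed, and you justify this by asserting that ``$x_{2n}x_1$ lies in at least $2n-\text{something}$ graphs.'' The hypotheses give no such thing: the minimum degree conditions say each \emph{vertex} has many neighbors in each graph, but a fixed \emph{pair} of vertices may be a non-edge in every single graph of $\mathcal{G}$. Indeed, under the standing assumption that $\mathcal{G}$ has no Hamiltonian transversal, $x_{2n}x_1\notin E(G_c)$ is forced, so this particular pair is exactly the kind of edge you should expect to be scarce. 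The same issue defeats your fallback branch ``directly close the original path into a Hamiltonian transversal'': that again requires $x_1x_{2n}\in E(G_c)$, whose absence is perfectly consistent with all hypotheses. Note also that your single chord from the endpoint is rigidly forced to be $x_1x_4$ (any longer chord $x_1x_{2i}$ gives a cycle of length $2n-2i+2\neq 2n-2$), so the degree condition at $x_1$ buys you essentially nothing in this scheme; you are left needing one specific chord and one specific closing edge, neither guaranteed.

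The paper's proof avoids the endpoint-to-endpoint edge entirely. Writing the path as $(q_1,p_2,q_2,p_3,\dots,p_n,q_n,p_1)$ with unused color $m_2$ and $m_1=\phi(p_2q_2)$, it drops the edge $p_2q_2$ (freeing $m_1$) and the edge $q_1p_2$ becomes the $K_2$. It then looks for \emph{two} chords landing at adjacent positions: $p_jq_2\in E(G_{m_1})$ and $p_1q_{j-1}\in E(G_{m_2})$ for a common $j\in[4,n]$, which together with the remaining path edges form a $(2n-2)$-cycle. The degree conditions give at least $\tfrac n2-\tfrac32$ admissible $j$ for the first chord and at least $\tfrac n2-1$ for the second, and since $(\tfrac n2-\tfrac32)+(\tfrac n2-1)>n-3=\bigl|[4,n]\bigr|$, pigeonhole produces a common $j$. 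The key idea your proposal is missing is this two-chord (``crossing'') configuration indexed by a single position $j$, which converts the problem into a pigeonhole over positions rather than a demand that one specific edge exist in one specific graph. A Pósa-rotation or auxiliary-digraph argument might conceivably be made to work, but as written your proposal does not supply one.
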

\begin{proof}
Let $P$ be a partial transversal in $\mathcal{G}$ isomorphic to a Hamiltonian path. We assume without loss of generality that $P$ has a vertex sequence $(q_1, p_2, q_2, p_3, \dots,p_n, q_n, p_1)$. We let $P$ have an associated injective function $\phi:E(P) \rightarrow [2n]$. We write $\phi(p_2 q_2) = m_1$ and $[2n] \setminus \im(\phi) = \{m_2\}$.
We show the key parts of $\mathcal G$ in Figure \ref{fig2n2}.
 If $p_1 q_{1} \in E(G_{m_2})$, then $\mathcal G$ contains a Hamiltonian transversal, and if $p_1 q_2  \in E(G_{m_2})$, then the claim is proven; hence, we may assume that no such edge exists in $G_{m_2}$. (Note that this immediately proves the claim when $n = 3$, and hence we may assume that $n \geq 4$.) We may similarly assume that $ p_1 q_2 \not \in E(G_{m_1})$. 

By our minimum degree conditions on red vertices and the assumption that $p_1 q_2 \not \in E(G_{m_1})$, there must exist at least $\frac{n}{2} - \frac{3}{2}$ edges of the form $p_j q_2 \in E(G_{m_1})$ with $j \in [4,n]$. Similarly, by our minimum degree conditions on blue vertices and our assumption that $p_1 q_1, p_1 q_2 \not \in E(G_{m_2})$, there must exist at least $\frac{n}{2} - 1$ values $j \in [4,n]$ for which $ p_1 q_{j-1}  \in E(G_{m_2})$. As there exist a total of $n - 3$ values $j \in [4,n]$, it follows by the pigeonhole principle that there exists a value $j \in [4,n]$ for which $p_j  q_2 \in E(G_{m_1})$ and $ p_1 q_{j-1} \in E(G_{m_2})$. Thus, $\mathcal G$ contains a partial transversal $C$ isomorphic to a cycle of length $2n - 2$ with a vertex sequence 
$$(q_2, p_j, q_j, p_{j+1}, q_{j+1}, \dots, p_n, q_n, p_1, q_{j-1}, p_{j-1}, \dots, q_3, p_3, q_2),$$
as shown in Figure \ref{fig2n2},
 and with an injective function $\psi:C \rightarrow [2n]$ satisfying $[2n] \setminus \im (\psi) = \{\phi( q_1   p_2 ), \phi(q_{j-1}p_j)\}$.
 Therefore, $C \cup \{ q_1  p_2 \}$ gives a partial transversal isomorphic to the disjoint union of a cycle of length $2n - 2$ and a $K_2$, and the claim is proven.
\end{proof}

\begin{figure}
\begin{tikzpicture}
[scale=2.2,auto=left,every node/.style={circle,fill=gray!30,minimum size = 6pt,inner sep=0pt}]
\node (z) at (-0.6,0.95) [fill = white]  {$q_n$};
\node (z) at (0.98,0.58) [fill = white]  {$q_2$};
\node (z) at (-0.9,0.66) [fill = white]  {$p_n$};
\node (z) at (0.67,0.96) [fill = white]  {$p_2$};
\node (z) at (0.66,0.58) [fill = white]  {$m_1$};
\node (z) at (0.28,-1.15) [fill = white]  {$q_{j-1}$};
\node (z) at (-0.18,-1.15) [fill = white]  {$p_j$};
\draw (0,0) circle [thick, radius=1];
\node (z) at (0,1) [fill = white] {};
\node (z) at (-0.15,1) [fill = white] {};
\node (z) at (0.2,1) [fill = white] {};
\node (z) at (-0.07,1) [fill = white] {};
\node (z) at (0.1,1) [fill = white] {};
\node (z) at (0.15,1) [fill = white] {};
\node (z) at (-0.07,1) [fill = white] {};
\node (z) at (0.07,1) [fill = white] {};

\node (z) at (0.28,1.12) [fill = white]  {$q_1$};
\node (z) at (-0.18,1.12) [fill = white]  {$p_1$};

\node (p2) at (0.6,0.8) [draw = black, fill = blue!30] {};
\node (qn) at (-0.6,0.8) [draw = black, fill = red!30] {};
\node (pn) at (-0.86,0.5) [draw = black, fill = blue!30]  {};

\node (q1) at (0.25,0.97) [draw = black, fill = red!30] {};
\node (p1) at (-0.18,0.98) [draw = black, fill = blue!30] {};
\node (qj1) at (0.25,-0.97) [draw = black, fill = red!30] {};
\node (pj) at (-0.18,-0.98) [draw = black, fill = blue!30] {};


\node (z) at (0.58,-0.2) [fill = white]  {$m_1$};
\node (z) at (-0.15,0.17) [fill = white]  {$m_2$};

\node (q2) at (0.86,0.5) [draw = black, fill = red!30]  {};





 \draw[dashed] (qj1) edge  (p1);
 \draw[dashed] (pj) edge  (q2);

\end{tikzpicture} 
\caption{The figure shows the key parts of the graph family $\mathcal G$ considered in Claim \ref{lemmaHamPath}. The Hamiltonian path partial transversal $P$ is represented by the broken circle with endpoints $q_1$ and $p_1$. The edge $p_2 q_2$ belongs to $G_{m_1}$, and $P$ ``misses" the graph $G_{m_2}$. If there exist two edges belonging to $G_{m_1}$ and $G_{m_2}$ in the form of the dotted edges, then $\mathcal G$ must contain a partial transversal containing the single edge $q_1 p_2$ as well as a cycle of length $2n - 2$ containing all other vertices of $X$.}
\label{fig2n2}
\end{figure}
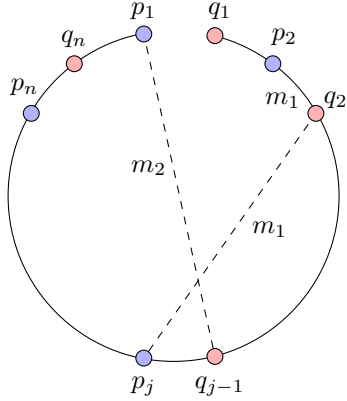 

In the next claim, we show that $\mathcal{G}$ must contain a partial transversal isomorphic to a cycle of length $2n - 2$. The proof of the claim closely follows a method of Moon and Moser \cite{Moon}, which in turn closely follows a method of P\'{o}sa \cite{Posa1962}.

\begin{claim}
$\mathcal{G}$ contains a partial transversal isomorphic to a cycle of length $2n-2$.
\label{lemmaCycle}
\end{claim}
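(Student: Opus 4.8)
The plan is to reduce Claim \ref{lemmaCycle} to Claim \ref{lemmaHamPath} by showing that, under the standing assumption that $\mathcal G$ has no Hamiltonian transversal, $\mathcal G$ must contain a partial transversal isomorphic to a Hamiltonian \emph{path}. Once such a path transversal is found, Claim \ref{lemmaHamPath} already supplies a partial transversal isomorphic to the disjoint union of a $(2n-2)$-cycle and a $K_2$, which in particular contains the desired $(2n-2)$-cycle transversal. So the real work is: either extract a $(2n-2)$-cycle transversal directly, or produce a Hamiltonian path transversal. I would follow the P\'osa--Moon--Moser rotation argument, adapted to the transversal setting.

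First I would take a partial transversal $P$ isomorphic to a path in $\mathcal G$ of maximum possible length, say with vertex sequence ending at a vertex $x$, and with $\phi:E(P)\to[2n]$ injective; let $M \subseteq [2n]$ be the (nonempty, if $P$ is not Hamiltonian) set of indices missed by $P$. The key observation is that because the color classes are balanced and the path alternates colors, a longest path transversal that is not Hamiltonian must have both endpoints of the \emph{same} color — otherwise one could try to extend. Then I would run the rotation technique: for an endpoint $x$ (say red), every graph $G_i$ with $i \in M$ gives $x$ at least $\tfrac n2$ blue neighbors, almost all of which lie on $P$; for each such neighbor $w$ on $P$, if the vertex $w'$ immediately after $w$ along $P$ (toward $x$) is such that $xw \in E(G_i)$, rotating the path at $w$ produces a new longest path transversal with a new endpoint $w'$, using the missed color $i$ in place of the edge $ww'$. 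Maximality of $P$ forbids genuine extensions, so the rotation/extension dichotomy forces many rotations; iterating, one collects a large set $W$ of vertices that can serve as endpoints of longest path transversals ending at (a rotation of) the other fixed endpoint.

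The crossing step is where the $2n-2$ cycle appears: once $W$ is large — of size exceeding $n/2$ — one uses the minimum degree condition on the \emph{other} fixed endpoint $y$ in some still-missed graph $G_j$, $j \in M\setminus\{i\}$ (here the fact that $|M|\ge 1$, together with a counting argument showing at least two colors are available, matters — note $\mathcal G$ has $2n$ graphs but $P$ has only $2n-2$ edges when $|P|=2n-1$ vertices, wait, a Hamiltonian path on $2n$ vertices has $2n-1$ edges, so exactly one color is missed; I would instead argue as Moon--Moser do that a longest non-Hamiltonian path transversal already leaves at least two colors unused because it has at most $2n-2$ vertices). A color $j$ missed by the path lets $y$ reach $\tfrac n2$ vertices; by pigeonhole one of them lies in the rotation set $W$, and closing up the corresponding rotated path through the edge in $G_j$ yields a cycle transversal of length equal to the path length $2n-2$ (or $2n-1$ vertices, i.e.\ a cycle on $2n$ vertices — but that would be Hamiltonian, contradicting our assumption, so the cycle has exactly $2n-2$ vertices). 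Alternatively, and more cleanly, I would argue: a longest path transversal $P$ must span all $2n$ vertices (else standard rotation-closure gives a cycle transversal on $|V(P)|$ vertices through a missed color, and if $|V(P)| < 2n$ this cycle can be extended by a fresh missed color, contradicting maximality of $P$), hence $\mathcal G$ has a Hamiltonian path transversal, and Claim \ref{lemmaHamPath} finishes the proof.

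I expect the main obstacle to be bookkeeping the injective labeling $\phi$ throughout the rotations: each rotation swaps which graph supplies which edge, and one must verify at every step that the set of used colors stays a subset of $[2n]$ of the right size and that at least one (better, two) colors remain available to perform the final closure. The degree bounds are used exactly as in the classical proof — the strict inequality $d(v) > n/2$ for red vertices absorbs the ``almost all neighbors lie on the path'' slack and handles the one or two vertices of the path that rotation cannot touch — so no new inequality is needed, but the argument must be phrased so that the missed-color set $M$ never empties prematurely. Since $\mathcal G$ has exactly $2n$ graphs while a path on $k$ vertices uses $k-1 \le 2n-1$ of them and we are in the non-Hamiltonian case, ensuring $|M|\ge 1$ is automatic for $k\le 2n$, and a short separate argument (as in the proof of Claim \ref{lemmaHamPath}, where both $m_1$ and $m_2$ were available) gives $|M|\ge 2$ when needed.
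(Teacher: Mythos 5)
There is a genuine gap, and it is concentrated in the parity of the longest path. Your ``key observation'' --- that a longest non-Hamiltonian path transversal must have both endpoints of the same colour --- is not justified (a path on $2n-2$ vertices with endpoints of opposite colours can be non-extendable if all neighbours of its endpoints via the missed colours lie on the path), and, worse, if it \emph{were} true it would destroy your own closure step: a path whose endpoints have the same colour has an odd number of vertices, and a bipartite graph family admits no odd cycle transversal, so ``standard rotation-closure gives a cycle transversal on $|V(P)|$ vertices'' is simply impossible in that case. Your ``cleaner'' final argument therefore only treats even $|V(P)|$ (where, with both endpoints' neighbourhoods on $P$, the sum of degrees exceeds $n\ge |V(P)|/2$ and a crossing chord exists --- and indeed for $|V(P)|=2n-2$ this already \emph{is} the desired cycle). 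The case of a longest path on an odd number of vertices, e.g.\ $2n-1$ vertices with both endpoints red and one blue vertex stranded off the path, is not ruled out and is exactly where the full P\'osa rotation machinery (which you only gesture at with ``iterating, one collects a large set $W$'') would have to be carried out; nothing in your sketch shows that the stranded vertex can be absorbed or that the rotations terminate in a contradiction.

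The paper sidesteps this entirely by working with an \emph{edge-maximal} counterexample rather than a longest path: it adds a single edge $e=p_1q_1$ (necessarily joining a red and a blue vertex) to some $G_1$, so that $\mathcal G+e$ contains a $(2n-2)$-cycle transversal through $e$, and deleting $e$ leaves a path of even length whose endpoints automatically have opposite colours. A single crossing/pigeonhole argument on that path then either closes it into a $(2n-2)$-cycle inside $\mathcal G$ or forces two specific chords producing a Hamiltonian path transversal, after which Claim~\ref{lemmaHamPath} finishes --- the same reduction you intend, but reached without ever confronting the same-colour-endpoint case. If you want to keep the longest-path route, you must either prove that a longest path partial transversal can be taken with endpoints of opposite colours, or supply the missing rotation argument for the odd case; as written, the proof does not go through.
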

\begin{proof}
Suppose the claim does not hold. Let $\mathcal{G}$ be an edge-maximal counterexample to the claim; that is, let $\mathcal{G}$ be a family of graphs such that after adding any edge to any graph $G_i \in \mathcal{G}$, the resulting family contains either a Hamiltonian transversal or a partial transversal isomorphic to a cycle of length $2n-2$. If each graph $G_i \in \mathcal{G}$ is a complete bipartite graph, then $\mathcal{G}$ certainly contains a Hamiltonian transversal. Therefore, we may assume without loss of generality that $G_1$ is not a complete bipartite graph and that we may add some edge $e$ to $G_1$ with a red endpoint and a blue endpoint. We may further assume that $\mathcal{G} + e = \{G_1 + e, G_2, \dots, G_{2n}\}$ contains a partial transversal $C$ isomorphic to a cycle of length $2n - 2$, since if $\mathcal G + e$ contains a Hamiltonian transversal, then $\mathcal G$ must contain a partial transversal isomorphic to a Hamiltonian path, and we are done by Claim \ref{claim2n2}.




We say without loss of generality that $e = p_1 q_1$. We also say without loss of generality that $C$ has a vertex sequence $(p_1, q_1, p_2, q_2, p_3, \dots, p_{n-1}, q_{n -1}, p_1)$. Let $\phi:E(C) \rightarrow [2n]$ be the injective function associated with the partial transversal $C$, and let $[2n] \setminus \im(\phi) = \{m_1, m_2\}$.
By our minimum degree conditions on the red vertices and the assumption that $p_1 q_1 \not \in E(G_{1})$, there must exist at least $\left \lceil \frac{n}{2} - \frac{1}{2} \right \rceil$ values $j \in [2,n-1]$ for which $q_1 p_j \in E(G_1)$. Similarly, by our minimum degree conditions on the blue vertices, there must exist at least $\left \lceil \frac{n}{2} - 2 \right \rceil $ values $j \in [2,n-1]$ for which $p_1 q_{j-1} \in E(G_{m_1})$. Then, one of two cases must occur, and we will see that in both cases, our minimal counterexample $\mathcal G$ is not actually a counterexample to the claim, which will complete the proof.
\begin{enumerate}
\item There exists a value $j \in [2,n-1]$ for which $q_1 p_j \in E(G_1)$ and $p_1 q_{j-1} \in E(G_{m_1})$. Then, $\mathcal G$ has a partial transversal isomorphic to a cycle of length $2n - 2$ with a vertex sequence of $$(q_1, p_2, q_2, \dots, q_{j-1}, p_1, q_{n-1}, p_{n-1}, \dots, p_j, q_1),$$
and $\mathcal G$ is not a counterexample to the claim.
\item Otherwise, as $\left \lceil \frac{n}{2} - \frac{1}{2} \right \rceil + \left \lceil \frac{n}{2} - 2 \right \rceil = n-2$, it must follow from the pigeonhole principle that there exist exactly $\left \lceil \frac{n}{2} - \frac{1}{2} \right \rceil$ values $j \in [2,n-1]$ for which $p_1 q_j \in E(G_1)$ and exactly $\left \lceil \frac{n}{2} - 2 \right \rceil $ values $j \in [2,n-1]$ for which $q_1 p_{j-1} \in E(G_{m_1})$. Then, it must follow that $p_1 q_n \in E(G_1)$ and  $q_1 p_n  \in E(G_{m_1})$. Then, $\mathcal{G}$ contains a partial transversal isomorphic to a Hamiltonian path, namely one beginning at $p_n$ and ending at $q_n$, and by Claim \ref{lemmaHamPath}, $\mathcal G$ is not a counterexample to the claim.
\end{enumerate}
\end{proof}

We are now ready to prove Claim \ref{claim2n2}, after which we will be ready for the main method of our proof. \\

\textit{Proof of Claim \ref{claim2n2}:}
By Claim \ref{lemmaCycle}, we may assume that $\mathcal{G}$ contains a partial transversal $C$ isomorphic to a cycle of length $2n - 2$. Let $\phi:E(C) \rightarrow [2n]$ be the injective function associated with $C$, and let $[2n] \setminus \im(\phi) = \{m_1, m_2\}$. Let $C$ have a vertex sequence $(p_1, q_1, p_2, q_2, \dots, p_{n-1}, q_{n-1}, p_1)$. 
If $p_nq_n \in E(G_{m_1})$ or $p_nq_n \in E(G_{m_2})$, then the claim is proven; otherwise, for each edge $e\in E(G_{m_1}) \cup E(G_{m_2})$, $e$ has an endpoint in $V(C)$.

We aim to show that $\mathcal{G}$ contains a partial transversal $P$ isomorphic to a Hamiltonian path. Let $A \subseteq V(C)$ be the set of vertices of $V(C)$ adjacent to $q_n$ via $G_{m_1}$. Recall that $A$ is a set of blue vertices. As $|A| \geq \frac{n}{2} + \frac{1}{2}$, it follows that at most $\frac{n}{2} - \frac{3}{2}$ red vertices of $C$ are not adjacent in $C$ to a vertex of $A$. Therefore, as $p_n$ has at least $\frac{n}{2} $ red neighbors in $V(C)$ via $G_{m_2}$, there must exist a blue vertex $a \in A$, a red vertex $b \in V(C)$ adjacent to $a$ in $C$, and an edge $bp_n \in E(G_{m_2})$. Then $\mathcal G$ contains a partial transversal isomorphic to a Hamiltonian path beginning at $p_n$ and ending at $q_n$. Then, by Claim \ref{lemmaHamPath}, we may find a partial transversal in $\mathcal{G}$ isomorphic to the disjoint union of a cycle of length $2n - 2$ and a $K_2$.
\qed \\

Now, with Claim \ref{claim2n2} in place, we are ready for the main idea of Theorem \ref{mainThm}. We will follow a method of Joos and Kim \cite{Joos} used for proving a transversal version of Dirac's theorem. The method of Joos and Kim fits our proof very closely, and therefore the remainder of our proof uses the ideas of \cite{Joos} with very few changes.

We let $\mathcal G$ have a partial transversal isomorphic to the disjoint union of a cycle $C$ of length $2n - 2$ and a graph $K \cong K_2$. We will rename our vertices; we say that $V(K) = \{x,y\}$, and we let $C$ have a vertex sequence $(v_1, v_2, \dots, v_{2n-2}, v_1)$. We let $x, v_2, v_4, \dots, v_{2n-2}$ be red vertices, and we let $y, v_1, v_3, \dots, v_{2n-3}$ be blue vertices. For each vertex $v_i \in V(C)$, we say that $\phi(v_i v_{i+1}) = i$ (where $v_{2n - 1}$ is identified with $v_1$). We let $\phi(xy) = 2n - 1$. With these assignments, $\phi$ ``misses" the value $2n$.

We define an auxiliary digraph $H$ on $X$. For every red vertex $v_i \in V(C)$ and for every edge $v_i v_j \in E(G_i)$ with $ j \neq i + 1$, we let $H$ include the arc $v_i v_j$. Furthermore, if $v_i y \in E(G_i)$, we let $H$ contain the arc $v_i y$. We write $d^+(v)$ and $d^-(v)$ respectively for the out-degree and in-degree of a vertex $v \in V(H)$. Note that for each red vertex $v_i \in V(H)$, $v_i$ has at least $\frac{n}{2} + \frac{1}{2}$ incident edges in the graph $G_i$, and hence $d^+(v_i) \geq \frac{n}{2} - \frac{1}{2}$.

\begin{claim}
\label{claimY}
$d^-(y) \leq \frac{n}{2} - 1$.
\end{claim}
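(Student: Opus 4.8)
The plan is to argue by contradiction: suppose $d^-(y) \geq \frac{n}{2}$, and deduce that $\mathcal{G}$ contains a Hamiltonian transversal, contradicting our standing assumption. The intuition is that an in-arc $v_i y$ of $H$ corresponds to an edge $v_i y \in E(G_i)$, i.e.\ an edge joining $y$ to the cycle $C$ using the color $i = \phi(v_i v_{i+1})$ that $C$ "spends" on the successor edge $v_i v_{i+1}$. If we also knew that the predecessor vertex $v_{i-1}$ (or some nearby vertex) could be joined to $x$ using the still-unused color $2n$, then we could splice the $K_2$ on $\{x,y\}$ into the cycle $C$: delete the edge $v_{i-1} v_i$ (or $v_i v_{i+1}$) from $C$, insert the path $v_{?} - x - y - v_i$, and recolor appropriately to obtain a Hamiltonian cycle transversal on all $2n$ vertices using each color exactly once.

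First I would set up the recoloring bookkeeping carefully. Since $y$ is blue, an in-arc $v_i y$ forces $v_i$ to be red, so $v_i \in \{v_2, v_4, \dots, v_{2n-2}\}$ and the color used is $i = \phi(v_i v_{i+1})$, which is the index of the edge of $C$ immediately \emph{after} $v_i$. If $d^-(y) \geq \frac{n}{2}$, then there are at least $\frac{n}{2}$ such red vertices $v_i$ with $v_i y \in E(G_i)$. For each such $v_i$, consider the blue vertex $v_{i+1}$ (the cycle-successor of $v_i$): we want an edge $x v_{i+1} \in E(G_{2n})$, so that we can form the Hamiltonian cycle that follows $C$ from $v_{i+1}$ around to $v_i$, then takes $v_i \to y$ (color $i$), then $y \to x$ (color $2n-1$), then $x \to v_{i+1}$ (color $2n$). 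This uses colors $\{1, \dots, 2n-2\} \setminus \{i\}$ on the cycle arcs (note the arc $v_i v_{i+1}$, color $i$, is removed), plus colors $i$, $2n-1$, $2n$ on the three spliced arcs — exactly $[2n]$, each once. So it suffices to find one red vertex $v_i$ with $v_i y \in E(G_i)$ such that $x v_{i+1} \in E(G_{2n})$.

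Now I would count. The red vertex $x$ has $d_{G_{2n}}(x) > \frac{n}{2}$, so $x$ has at least $\frac{n}{2} + \frac{1}{2}$ blue neighbors in $G_{2n}$ among $\{y, v_1, v_3, \dots, v_{2n-3}\}$; at most one of these is $y$, so $x$ has at least $\frac{n}{2} - \frac{1}{2}$ neighbors among the blue cycle-vertices $\{v_1, v_3, \dots, v_{2n-3}\}$. The set of "good successors" $\{v_{i+1} : v_i y \in E(G_i),\ v_i \text{ red}\}$ has size $d^-(y) \geq \frac{n}{2}$, and these are all among the $n-1$ blue cycle-vertices. Two subsets of an $(n-1)$-element set of sizes at least $\frac{n}{2}$ and at least $\frac{n}{2} - \frac{1}{2}$ have total size at least $n - \frac{1}{2} > n - 1$, so by pigeonhole they intersect: there is a red $v_i$ with $v_i y \in E(G_i)$ and $x v_{i+1} \in E(G_{2n})$. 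This produces the Hamiltonian transversal, contradiction, so $d^-(y) \leq \frac{n}{2} - 1$.

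The main obstacle I anticipate is getting the parity and off-by-one arithmetic exactly right — in particular confirming that the blue cycle-vertices number exactly $n-1$, that the "successor" map $v_i \mapsto v_{i+1}$ is injective on the relevant set so $|d^-(y)|$ really transfers to a set of that size among blue cycle-vertices, and that subtracting off the possibility $v_{i+1} = y$ or the neighbor being $y$ is handled (here $y \notin V(C)$, so that case does not even arise, which is clean). One should also double-check the recoloring: the removed cycle edge is precisely $v_iv_{i+1}$ with color $i$, and color $i$ is reused on the new arc $v_i y$, which is legitimate since that arc lies in $G_i$; meanwhile colors $2n-1$ and $2n$ come from $K$ and the single missed index, so no color is repeated. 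Once the bookkeeping is pinned down, the pigeonhole step is immediate.
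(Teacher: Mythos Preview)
Your argument is clean for even $n$ but has a genuine gap when $n$ is odd. The issue is the negation step: the claim is $d^-(y) \leq \frac{n}{2} - 1$, and when $n$ is odd the right-hand side is not an integer, so its negation for the integer $d^-(y)$ is only $d^-(y) \geq \frac{n-1}{2} = \frac{n}{2} - \frac{1}{2}$, not $d^-(y) \geq \frac{n}{2}$. With this weaker hypothesis your pigeonhole count collapses: the successor set $\{v_{i+1} : v_i \in N^-(y)\}$ has size $\geq \frac{n-1}{2}$, and $N_{G_{2n}}(x) \cap V(C)$ has size $\geq \frac{n+1}{2} - 1 = \frac{n-1}{2}$ (you must subtract one for the possibility $xy \in E(G_{2n})$), so the two sets have total size $\geq n-1$, exactly the number of blue cycle-vertices. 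They can be complementary, and your splice need not exist.

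The paper's proof works from the correct negation $d^-(y) \geq \frac{n}{2} - \frac{1}{2}$ and proceeds differently. Instead of pigeonholing in $G_{2n}$, it fixes \emph{one} neighbor $v_j$ of $x$ via $G_{2n}$ and then counts neighbors of the red vertex $v_{j+1}$ in the graph $G_j$. For each in-neighbor $v_i$ of $y$, the edge $v_{i+1}v_{j+1} \in E(G_j)$ would yield a Hamiltonian transversal (a two-chord splice rather than your one-chord splice), so all $\frac{n}{2} - \frac{1}{2}$ vertices $v_{i+1}$ are forbidden as $G_j$-neighbors of $v_{j+1}$. Since $v_{j+1}$ is red with $d_{G_j}(v_{j+1}) \geq \frac{n}{2} + \frac{1}{2}$ and only $\frac{n}{2} - \frac{1}{2}$ blue cycle-vertices remain available, this forces $v_{j+1} y \in E(G_j)$, which then gives exactly the simple splice you were aiming for. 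The extra indirection through $G_j$ is what recovers the missing unit in the count.
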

\begin{proof}
Suppose $d^-(y) \geq \frac{n}{2} - \frac{1}{2}$. Let $v_j \in V(C)$ be a neighbor of $x$ via $G_{2n}$. For every in-neighbor $v_i$ of $y$, we must have $v_{i+1} v_{j+1} \not \in E(G_j)$, as otherwise, $\mathcal G$ contains a Hamiltonian transversal with the vertex sequence
$$(v_j,x,y,v_i,v_{i-1}, \dots, v_{j+2}, v_{j+1}, v_{i+1}, v_{i+2}, \dots, v_j),$$
as shown in Figure \ref{figIndegClaim}. Therefore,  
$$|N_{G_j}(v_{j+1}) \cap V(C)| \leq (n-1) - \left ( \frac{n}{2} - \frac{1}{2} \right ) = \frac{n}{2} - \frac{1}{2},$$
and hence we must have $v_{j+1} y \in E(G_j)$. Then, $\mathcal G$ contains a Hamiltonian transversal with a vertex sequence $(v_1, \dots, v_j, x, y, v_{j+1}, v_{j+2}, \dots, v_{2n-2}, v_1)$, a contradiction. Therefore, we conclude that $d^-(y) \leq \frac{n}{2} - 1$.
\end{proof}

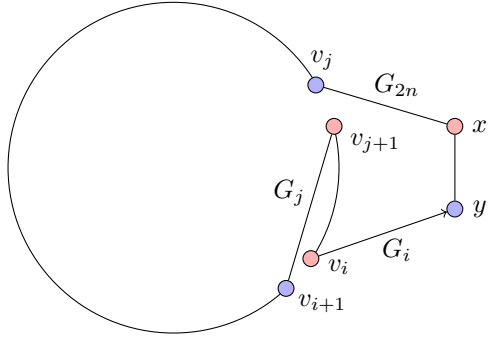
\begin{figure}
\begin{tikzpicture}
[scale=2.2,auto=left,every node/.style={circle,fill=gray!30,minimum size = 6pt,inner sep=0pt}]
\node (z) at (0.9,-0.82) [fill = white]  {$v_{i+1}$};
\node (z) at (0.9,0.65) [fill = white]  {$v_j$};
\node (z) at (1,-0.6) [fill = white]  {$v_{i}$};
\node (z) at (1.22,0.15) [fill = white]  {$v_{j+1}$};
\draw (0,0) circle [thick, radius=1];

;
\node (z) at (0.86,0.5) [fill = white] {};
\node (z) at (0.9,0.45) [fill = white] {};
\node (z) at (0.92,0.4) [fill = white] {};
\node (z) at (0.93,0.35) [fill = white] {};
\node (z) at (0.93,0.3) [fill = white] {};

\node (z) at (0.83,-0.55) [fill = white] {};
\node (z) at (0.8,-0.6) [fill = white] {};
\node (z) at (0.75,-0.65) [fill = white] {};
\node (z) at (0.7,-0.7) [fill = white] {};




\node (z) at (0.7,-0.15) [fill = white]  {$G_j$};

\node (z) at (1.35,0.5) [fill = white]  {$G_{2n}$};
\node (z) at (1.35,-0.5) [fill = white]  {$G_{i}$};
\node (vj) at (0.86,0.5) [draw = black, fill = blue!30]  {};
\node (vj1) at (0.97,0.25) [draw = black, fill = red!30]  {};

\node (z) at (1.85,0.25) [fill = white]  {$x$};
\node (z) at (1.85,-0.25) [fill = white]  {$y$};

\node (x) at (1.7,0.25) [draw = black, fill = red!30]{};
\node (y) at (1.7,-0.25) [draw = black, fill = blue!30]  {};



\node (vk1) at (0.68,-0.73) [draw = black, fill = blue!30]  {};
\node (vk) at (0.83,-0.55) [draw = black, fill = red!30]  {};



 \path[->] (vk) edge  (y);
\foreach \from/\to in {x/vj, x/y, vj1/vk1}
    \draw (\from) -- (\to);

\end{tikzpicture} 
\caption{In the figure, the broken circle represents the edges of the partial transversal $C$ considered in the proof of Theorem \ref{mainThm}.
The figure shows one form of Hamiltonian transversal that may be found in $\mathcal G$ using the argument in Claim \ref{claimY} if $d^-(y) > \frac{n}{2} - 1$. }
\label{figIndegClaim}
\end{figure}

Next, we show that some blue vertex $w \in V(C)$ satisfies $d^{-}(w) \geq \frac{n}{2} - \frac{1}{2}$. Recall that each red vertex of $V(C)$ has out-degree at least $\frac{n}{2} - \frac{1}{2}$. Let $E^-(C)$ denote the set of arcs in $H$ that end in $V(C)$. As $d^-(y) \leq \frac{n}{2} -  1$, we have that 
\begin{equation}
\label{eqnEC} \tag{$\star$}
|E^-(C)| \geq \left (\frac{n}{2} - \frac{1}{2} \right )(n - 1) - \left ( \frac{n}{2} - 1 \right ) > (n - 1) \left (\frac{n}{2} - \frac{3}{2} \right ). 
\end{equation}
Therefore, by the pigeonhole principle, some blue vertex $w \in V(C)$ must satisfy $d^-(w) \geq \frac{n}{2} - 1$. Furthermore, if all blue vertices $w' \in V(C)$ satisfy $d^-(w') \leq \frac{n}{2} - 1$, then $d^-(w^*) = \frac{n}{2} - 1$ for some blue vertex $w^*$, and hence $n$ is even. However, in this case, each red vertex of $V(C)$ has an out-degree of at least $\frac{n}{2}$, and we can obtain a better lower bound for $|E^-(C)|$:
$$|E^-(C)| \geq \frac{n}{2} (n - 1) - \left ( \frac{n}{2} - 1 \right ) > (n - 1) \left (\frac{n}{2} - \frac{1}{2} \right).$$
This second inequality implies that some blue vertex of $V(C)$ has in-degree at least $\frac{n}{2}$. Therefore, we may proceed with the assumption that there exists a blue vertex $w \in V(C)$ for which $d^-(w) \geq \frac{n}{2} - \frac{1}{2}$. 

Now, we consider two cases. In both cases, we will find a Hamiltonian transversal on $\mathcal G$, which will complete the proof. \\ \\
\textbf{Case 1:} There exists a vertex $w \in V(C)$ for which $d^-(w) \geq \frac{n}{2}$.

We will attempt to find a partial transversal on $\mathcal G$ isomorphic to a Hamiltonian path and satisfying certain conditions.
Without loss of generality, we assume $d^-(v_1) \geq \frac{n}{2} $. We claim that the set 
$$N^*:=(N_{G_1}(x) \cup N_{G_{2n}}(y)) \cap V(C)$$ 
is not an independent set in the cycle $C$. Indeed, suppose that $N^*$ is an independent set in $C$. The set $N_{G_1}(x) \cap V(C)$ must contain at least $\frac{n}{2} - \frac{1}{2}$ vertices, and thus there must exist at least $\frac{n}{2} + \frac{1}{2}$ vertices adjacent in $C$ to $N_{G_1}(x) \cap V(C)$. Then, if $N^*$ is an independent set, $N_{G_{2n}}(y) \cap V(C)$ contains at most 
$(n-1) - \left (\frac{n}{2} + \frac{1}{2} \right) < \frac{n}{2} - 1$
vertices, a contradiction. Therefore, there exists a pair of vertices $v_j, v_{j+1}$ such that either $v_j \in N_{G_1}(x)$ and $v_{j+1} \in N_{G_{2n}}(y)$, or $v_{j+1} \in N_{G_1}(x)$ and $v_{j} \in N_{G_{2n}}(y)$
(where $v_{2n - 1}$ is identified with $v_1$). 
If $j = 1$, then $\mathcal{G}$ contains a Hamiltonian transversal, which may be observed in Figure \ref{fig1} by identifying $(v_j, v_{j+1})$ with $(v_1, v_2)$. Otherwise, we have a partial transversal $P$ isomorphic to a Hamiltonian path with an injective function $\psi$ and a vertex sequence of the form 
$$
 (v_2, v_3, \dots, v_j, \sigma, \tau, v_{j+1}, \dots, v_{2n-2}, v_1),
$$
where either $(\sigma, \tau) = (x,y)$ or $(\sigma, \tau) = (y,x)$.
 We rename this vertex sequence of $P$ as $(v^1, v^2, \dots, v^{2n})$. We observe that $[2n] \setminus \im(\psi) = \{j\}$. 

Finally, we consider the following sets, again identifying $v_{2n - 1}$ with $v_1$:
$$I_j:= \{i \in [2n - 2]: v_{i+1} v_2 \in E(G_j)\},$$
$$I^-:= \{i \in [2n - 2]: v_i \in N^-(v_1)\}.$$
As $d^-(v_1) \geq \frac{n}{2}$ and $d^+(x) = 0$, we know that $|I^-| \geq \frac{n}{2}$, and as $d_{G_j}(v_2) \geq \frac{n}{2} + \frac{1}{2}$, we know that $|I_j| \geq \frac{n}{2} - \frac{1}{2}$. 
Furthermore, if $v_1 v_2 \in E(G_j)$, then $\mathcal{G}$ contains a Hamiltonian transversal; hence we may assume that $I_j \subseteq [2n - 4] \cap 2 \mathbb{Z}$. Similarly, as $v_1$ does not have an in-neighbor $v_{2n - 2}$, we may also assume that $I^- \subseteq [2n - 4] \cap 2 \mathbb{Z}$. Therefore, $I_j$, $I^-$ are both subsets of $[2n - 4] \cap 2 \mathbb{Z}$, which contains $n-2$ elements. Hence, as $|I_j|+ |I^-| = n$, we have at least two elements in the intersection $ I_j \cap I^-$, and in particular we have an element $k \in I_j \cap I^-$ for which $ k \neq j$. We may write $v^l = v_k$, and as $k \neq j$, $v_{k+1} = v^{l+1}$. Then, we see that 
$$(v^1, v^2, \dots, v^l, v^{2n-2}, v^{2n-1}, \dots, v^{l+1}, v^1)$$
 gives a vertex sequence for a Hamiltonian transversal on $\mathcal{G}$, which we illustrate in Figure \ref{fig1} (i). Thus, in this case, the proof is complete. \\ \\
\textbf{Case 2:} Every vertex $w \in V(C)$ satisfies $d^{-}(w) \leq \frac{n}{2} - \frac{1}{2}$.

In this case, as there must exist a vertex in $V(C)$ with an in-degree of exactly $\frac{n}{2}- \frac{1}{2}$, we know that $n$ is odd.
 Let $a$ be the number of vertices $w \in V(C)$ satisfying $d^-(w) = \frac{n}{2} - \frac{1}{2}$. Then, by $(\star)$, we have
$$a \left ( \frac{n}{2} - \frac{1}{2} \right ) + (n-1-a) \left ( \frac{n}{2} - \frac{3}{2} \right ) \geq \left (\frac{n}{2} - \frac{1}{2} \right )(n - 1) - \left ( \frac{n}{2} - 1 \right ),$$
or equivalently, $a \geq \frac{n}{2}$. 
Therefore, since $|N_{G_{2n}}(y) \cap V(C) | \geq \frac{n}{2} - \frac{1}{2}$, we may choose a neighbor $v_{j+1}$ of $y$ via $G_{2n}$ such that $d^-(v_{j}) = \frac{n}{2} - \frac{1}{2}$. Note that since $v_j$ is a blue vertex, $j$ is odd.

Finally, we consider the following sets, again identifying $v_{2n - 1}$ with $v_1$:
$$I_j:= \{i \in [2n - 2]: v_{i+1} x \in E(G_j)\},$$
$$I^-:= \{i \in [2n - 2]: v_{i} \in N^-(v_j)\}.$$
If $j-1 \in I_j$, then $\mathcal G$ must contain a Hamiltonian transversal, so we may assume that $j-1 \not \in I_j$. Furthermore, by construction of $H$, $j-1 \not \in I^-$. Therefore, both $I_j$ and $I^-$ are subsets of $[2n-2] \cap (2 \mathbb Z ) \setminus \{j-1\}$, a set of $n - 2$ elements.

Now, as $d_{G_j}(x) \geq \frac{n}{2} + \frac{1}{2}$, we know that $|I_j| \geq \frac{n}{2} - \frac{1}{2}$. Furthermore, as $d^-(v_j) = \frac{n}{2} - \frac{1}{2}$ and $d^+(x)= 0$, we know that $|I^-| = \frac{n}{2} - \frac{1}{2}$. Therefore,
$|I_j| + |I^-| = n -1 ,$ 
and hence there must exist a value $k \in I_j \cap I^-$. Therefore, there exists a Hamiltonian transversal with a vertex sequence
$$(v_{k+1}, v_{k+2}, \dots, v_{j-1}, v_j, v_{k}, v_{k-1}, \dots, v_{j+1}, y,x,v_{k+1}),$$
 as shown in Figure \ref{fig1} (ii). Thus, in this case as well, the proof is complete.
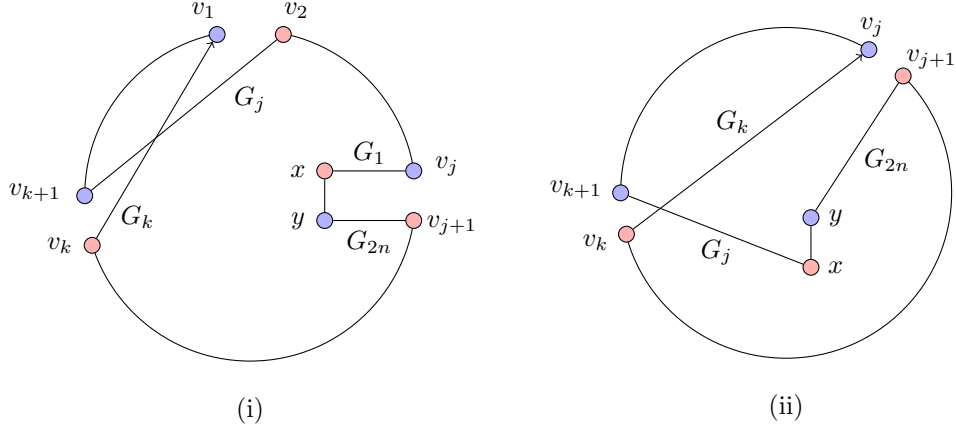
\begin{figure}
\begin{tikzpicture}
[scale=2.2,auto=left,every node/.style={circle,fill=gray!30,minimum size = 6pt,inner sep=0pt}]
\node (z) at (-1.15,-0.3) [fill = white]  {$v_k$};
\node (z) at (1.19,0.18) [fill = white]  {$v_j$};
\node (z) at (-1.3,0.02) [fill = white]  {$v_{k+1}$};
\node (z) at (1.22,-0.18) [fill = white]  {$v_{j+1}$};
\draw (0,0) circle [thick, radius=1];
\node (z) at (0,1) [fill = white] {};
\node (z) at (-0.2,1) [fill = white] {};
\node (z) at (0.2,1) [fill = white] {};
\node (z) at (-0.1,1) [fill = white] {};
\node (z) at (0.1,1) [fill = white] {};
\node (z) at (-0.15,1) [fill = white] {};
\node (z) at (0.15,1) [fill = white] {};
\node (z) at (-0.07,1) [fill = white] {};
\node (z) at (0.07,1) [fill = white] {};

\node (z) at (1,0) [fill = white] {};
\node (z) at (1,-0.1) [fill = white] {};
\node (z) at (1,0.1) [fill = white] {};
\node (z) at (1,-0.15) [fill = white] {};
\node (z) at (1,0.15) [fill = white] {};
\node (z) at (1,-0.07) [fill = white] {};
\node (z) at (1,0.07) [fill = white] {};

\node (z) at (0,-1.3) [fill = white] {(i)};

\node (z) at (-1,0) [fill = white] {};
\node (z) at (-1,-0.1) [fill = white] {};
\node (z) at (-0.97,-0.21) [fill = white] {};
\node (z) at (-1,-0.15) [fill = white] {};
\node (z) at (-1,-0.07) [fill = white] {};
\node (z) at (-1,-0.2) [fill = white] {};

\node (z) at (0.28,1.12) [fill = white]  {$v_2$};
\node (z) at (-0.28,1.12) [fill = white]  {$v_1$};

\node (v2) at (0.2,0.975) [draw = black, fill = red!30] {};
\node (v1) at (-0.2,0.975) [draw = black, fill = blue!30] {};


\node (z) at (0,0.58) [fill = white]  {$G_j$};
\node (z) at (-0.68,-0.13) [fill = white]  {$G_k$};

\node (z) at (1.7,0.28) [fill = white]  {};

\node (z) at (0.72,0.26) [fill = white]  {$G_1$};
\node (z) at (0.72,-0.28) [fill = white]  {$G_{2n}$};
\node (d) at (0.988,0.15) [draw = black, fill = blue!30]  {};
\node (e) at (0.988,-0.15) [draw = black, fill = red!30]  {};

\node (z) at (0.29,0.15) [fill = white]  {$x$};
\node (z) at (0.29,-0.15) [fill = white]  {$y$};
\node (x) at (0.45,0.15) [draw = black, fill = red!30]{};
\node (y) at (0.45,-0.15) [draw = black, fill = blue!30]  {};



\node (vk) at (-0.955,-0.3) [draw = black, fill = red!30]  {};
\node (vk1) at (-1,0) [draw = black, fill = blue!30]  {};



 \path[->] (vk) edge  (v1);
 \draw (vk1) edge  (v2);
\foreach \from/\to in {x/d,y/e,x/y}
    \draw (\from) -- (\to);

\end{tikzpicture} 
\begin{tikzpicture}
[scale=2.2,auto=left,every node/.style={circle,fill=gray!30,minimum size = 6pt,inner sep=0pt}]
\node (z) at (-1.15,-0.3) [fill = white]  {$v_{k}$};
\node (z) at (0.52,1) [fill = white]  {$v_j$};
\node (z) at (-1.27,0.02) [fill = white]  {$v_{k+1}$};
\node (z) at (0.88,0.81) [fill = white]  {$v_{j+1}$};
\draw (0,0) circle [thick, radius=1];
\node (z) at (0.5,0.86) [fill = white] {};
\node (z) at (0.55,0.81) [fill = white] {};
\node (z) at (0.6,0.76) [fill = white] {};
\node (z) at (0.67,0.71) [fill = white] {};
\node (z) at (0.63,0.73) [fill = white] {};
\node (z) at (-1,0) [fill = white] {};
\node (z) at (-1,-0.1) [fill = white] {};
\node (z) at (-0.97,-0.21) [fill = white] {};
\node (z) at (-1,-0.15) [fill = white] {};
\node (z) at (-1,-0.07) [fill = white] {};
\node (z) at (-1,-0.2) [fill = white] {};

\node (z) at (0,-1.3) [fill = white] {(ii)};



\node (z) at (-0.32,0.43) [fill = white]  {$G_k$};

\node (z) at (-0.42,-0.36) [fill = white]  {$G_j$};
\node (z) at (0.61,0.2) [fill = white]  {$G_{2n}$};
\node (vj) at (0.5,0.866) [draw = black, fill = blue!30]  {};
\node (vj1) at (0.707,0.707) [draw = black, fill = red!30]  {};

\node (z) at (0.3,-0.15) [fill = white]  {$y$};
\node (z) at (0.3,-0.45) [fill = white]  {$x$};
\node (x) at (0.15,-0.15) [draw = black, fill = blue!30]{};
\node (y) at (0.15,-0.45) [draw = black, fill = red!30]  {};



\node (vk1) at (-0.965,-0.25) [draw = black, fill = red!30]  {};
\node (vk) at (-1,0) [draw = black, fill = blue!30]  {};



 \path[->] (vk1) edge  (vj);
\foreach \from/\to in {x/vj1,y/vk,x/y}
    \draw (\from) -- (\to);

\end{tikzpicture} 






\caption{In both figures, the broken circle represents the edges of the partial transversal $C$ considered in the proof of Theorem \ref{mainThm}.  Figure (i) shows one form of a Hamiltonian transversal found in Case 1 of the proof, and Figure (ii) shows one form of a Hamiltonian transversal found in Case 2 of the proof.}
\label{fig1}
\end{figure}

\section{Bipancyclicity: Proof of Theorem \ref{thmPancyclic}}
This section will be dedicated to proving Theorem \ref{thmPancyclic}. We will continue to use our definitions of $X$ and $\mathcal{G}$ from the previous section. One technique that we will frequently use is that of ``augmenting" a cycle, as in Figure \ref{figAug}. We will establish two lemmas that will be useful when using this technique.

\begin{figure}
\begin{tikzpicture}
[scale=1.5,auto=left,every node/.style={circle,fill=gray!30,minimum size = 6pt,inner sep=0pt}]
\draw (0,0) circle [thick, radius=1];

\node (d) at (0.99,0.15) [draw = black, fill = blue!30]  {};
\node (e) at (0.99,-0.15) [draw = black, fill = red!30]  {};
\node (a) at (0,0) [fill = white] {$C$};
\node (x) at (1.8,0.12) [draw = black, fill = red!30]  {};
\node (y) at (1.8,-0.12) [draw = black, fill = blue!30]  {};

 \draw (x) edge[ultra thick] (d);
 \draw (x) edge[ultra thick] (y);
 \draw (e) edge[ultra thick] (y);

\foreach \from/\to in {x/d,y/e,x/y}
    \draw (\from) -- (\to);
\end{tikzpicture} 
\begin{tikzpicture}
[scale=1.5,auto=left,every node/.style={circle,fill=gray!30,minimum size = 6pt,inner sep=0pt}]
\draw (0,0) circle [thick, radius=1];
\node (d) at (-3,0.5) [fill = white]  {};
\node (d) at (0.97,0.25) [draw = black, fill = blue!30]  {};
\node (f) at (0.97,-0.25) [draw = black, fill = blue!30]  {};
\node (e) at (1,0) [draw = black, fill = red!30]  {};
\node (a) at (0,0) [fill = white] {$C$};
\node (x) at (1.8,0.25) [draw = black, fill = red!30]  {};
\node (y) at (1.8,0) [draw = black, fill = blue!30]  {};
\node (w) at (1.8,-0.25) [draw = black, fill = red!30]  {};

 \draw (x) edge[ultra thick] (d);
 \draw (x) edge[ultra thick] (y);
 \draw (y) edge[ultra thick] (w);
 \draw (w) edge[ultra thick] (f);

\foreach \from/\to in {x/d,x/y,y/w,w/f}
    \draw (\from) -- (\to);
\end{tikzpicture}

\caption{In each figure, the cycle $C$ is ``augmented" by the bolded edges; that is, when the bolded edges are added to the cycle $C$, a cycle longer than $C$ may be found.}
\label{figAug}
\end{figure}
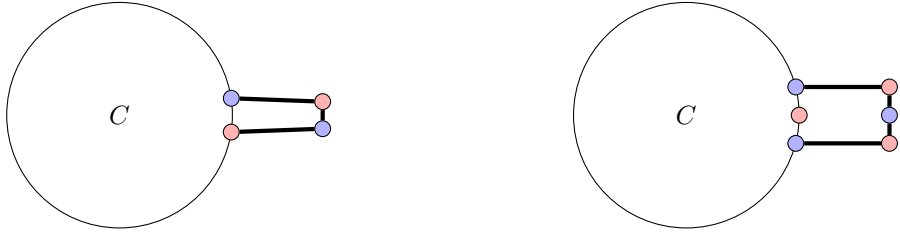

\begin{lemma}
Let $n$ be an integer, and let $(\mathbb{Z}_{2n},+)$ be the cyclic group of $2n$ elements. Let $d \in [1,n]$, and let $A \subseteq \mathbb{Z}_{2n}$. Let $B = (A + d) \cup (A - d)$. If $|A| = |B|$, then $A = A + 2d$.
\label{lemmaNum}
\end{lemma}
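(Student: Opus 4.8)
The plan is to think of $A$ as a subset of the cyclic group $\mathbb{Z}_{2n}$ and to exploit the hypothesis $|A| = |B|$ where $B = (A+d) \cup (A-d)$. Since $|A+d| = |A-d| = |A|$, the inclusion–exclusion bound gives $|B| = |(A+d) \cup (A-d)| \leq |A+d| + |A-d| = 2|A|$, but more importantly $|B| \geq |A+d| = |A|$, with equality in $|B| = |A|$ forcing very rigid structure. First I would observe that $B \supseteq A+d$, so $|B| = |A| = |A+d|$ forces $B = A+d$; symmetrically $B = A - d$. Hence $A + d = A - d$ as sets, and translating both sides by $d$ yields $A + 2d = A$, which is exactly the conclusion. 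The whole argument is essentially one line once the set equality $B = A+d$ is extracted from the cardinality hypothesis.

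In more detail: I would write $A' = A+d$ and $A'' = A-d$, note $|A'| = |A''| = |A|$ because translation is a bijection on $\mathbb{Z}_{2n}$, and note $B = A' \cup A'' \supseteq A'$. From $|B| = |A| = |A'|$ and $A' \subseteq B$ with both finite, we get $B = A'$; likewise $B = A''$. Therefore $A + d = A - d$. Adding $d$ (i.e.\ applying the translation $x \mapsto x+d$) to both sides gives $A + 2d = A$, as desired. One should double-check that $d \in [1,n]$ plays no role in this particular implication — indeed it does not; the hypothesis on $d$ is presumably used elsewhere (e.g.\ to ensure $2d \not\equiv 0$ or to control the structure of $A$ when the lemma is applied), but the stated implication holds for any $d$.

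I do not anticipate a genuine obstacle here; the only thing to be careful about is the direction of the set inclusions (making sure one uses $B \supseteq A+d$ rather than $B \subseteq A+d$, which is false) and the fact that finiteness of the ambient group is what upgrades "$A' \subseteq B$ and $|A'| = |B|$" to "$A' = B$". If one wanted a slightly more hands-on version avoiding the inclusion–exclusion phrasing, one could argue directly: if $A+d \neq A-d$, then some element lies in exactly one of them, so $|B| = |(A+d) \cup (A-d)| > \max(|A+d|, |A-d|) = |A|$, contradicting $|A| = |B|$; hence $A+d = A-d$ and the conclusion follows by translation. Either phrasing is short enough that I would just write it out directly rather than invoking any machinery.
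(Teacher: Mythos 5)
Your proof is correct, and it is genuinely different from (and shorter than) the one in the paper. You extract everything from the single observation that $A+d \subseteq B$ and $|A+d| = |A| = |B|$ force $B = A+d$, and symmetrically $B = A-d$, whence $A+d = A-d$ and the conclusion follows by translating by $d$. The paper instead builds an auxiliary bipartite graph on $A \sqcup B$ in which each $a \in A$ is joined to $a+d$ and $a-d$, uses a handshaking argument to conclude that every $b \in B$ has degree exactly $2$ when $|A|=|B|$, and then reads off $a+2d \in A$ from the degree of $a+d$; this requires splitting off the case $d=n$ (where $a+d=a-d$ and the degree count would break), a case your argument handles uniformly. Your version also makes transparent that the hypothesis $d \in [1,n]$ is irrelevant to this implication, as you note. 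The only thing the paper's graph-theoretic phrasing buys is that it sets up the machinery reused in the proof of the subsequent lemma (Lemma~\ref{lemmaPath}), where a similar degree count on $(A+1)\cup(A-1)$ is pushed further to get structural information about $A$ when $|B| = |A|+1$; your one-line argument does not directly generalize to that setting, but as a proof of the stated lemma it is complete and preferable for its brevity.
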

\begin{proof}
If $d = n$, then $2d = 0$, and $A = A + 2d$. Otherwise, we define an auxiliary bipartite graph $H$ with partite sets $A$ and $B$. For an element $a \in A$, we let $a$ share an edge with $a+d$ and with $a - d$. By construction, each element $a \in A$ is of degree $2$ in $H$. By a handshaking argument, if $|A| = |B|$, then each element $b \in B$ must also have degree $2$ in $H$. 

We consider an element $a \in A$. The element $a$ has a neighbor $a + d \in B$. As $a + d$ has degree $2$ in $H$, it follows that $a + 2d \in A$; otherwise, no other element of $A$ could be a second neighbor of $a+d$ in $H$. Therefore, $A = A +  2d $.
\end{proof}

\begin{lemma}
Let $n \geq 2$ be an integer, and let $(\mathbb{Z}_{2n},+)$ be the cyclic group of $2n$ elements. Let $k \leq \frac{n}{2}$, and let $A \subseteq \mathbb{Z}_{2n}$ be a set of $k$ odd integers. Let $B = (A + 1) \cup (A - 1)$. Then
\begin{itemize}
\item $|B| \geq |A| + 1$.
\item If $|B| = |A| + 1$, then $A$ is a set of the form $\{a, a+2, a+4, \dots, a+2(k-1)\}$ for some $a \in \mathbb{Z}_{2n}$.
\end{itemize}
\label{lemmaPath}
\end{lemma}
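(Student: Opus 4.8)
\textbf{Proof proposal for Lemma \ref{lemmaPath}.}

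The plan is to work entirely inside $\mathbb{Z}_{2n}$ and exploit the fact that $A$ consists of odd elements, so that $A$, $A+1$, and $A-1$ interact cleanly with the parity partition of $\mathbb{Z}_{2n}$ into evens and odds. Note first that $B = (A+1) \cup (A-1)$ is a set of \emph{even} elements, so $A$ and $B$ are disjoint and live in complementary halves of $\mathbb{Z}_{2n}$, each of size $n$. The natural tool is the same auxiliary bipartite graph $H$ used in Lemma \ref{lemmaNum}: partite sets $A$ and $B$, with each $a \in A$ joined to $a+1$ and $a-1$. Every vertex of $A$ has degree exactly $2$ in $H$ (the neighbours $a+1$ and $a-1$ are distinct since $2 \neq 0$ in $\mathbb{Z}_{2n}$ as $n \geq 2$), so $H$ has exactly $2|A| = 2k$ edges, and $H$ is a disjoint union of paths and cycles after contracting the bipartite structure — more precisely, $H$ is a subgraph of the $2n$-cycle on vertex set $\mathbb{Z}_{2n}$ with consecutive elements adjacent, restricted to edges with one endpoint an odd element of $A$.

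For the first bullet, I would argue that $|B| \geq |A| + 1$ by a connectivity/counting argument on $H$: since every vertex of $A$ has degree $2$, $H$ restricted to any connected component is either a cycle or has all its degree-$1$ vertices in $B$. A component that is a cycle in $H$ would, since it alternates between $A$ and $B$, have equally many $A$- and $B$-vertices; but such a cycle, embedded in the $2n$-cycle, would have to be the whole $2n$-cycle (the only cycle in a cycle graph), forcing $A$ to be all of the odd residues, i.e.\ $k = n$, contradicting $k \leq \tfrac{n}{2}$ (here we use $n \geq 2$ so that $\tfrac n 2 < n$). Hence $H$ has no cycles, so it is a forest; a forest with $2k$ edges has at least $2k+1$ vertices, i.e.\ $|A| + |B| \geq 2k + 1$, giving $|B| \geq k + 1 = |A| + 1$.

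For the second bullet, suppose $|B| = |A| + 1$, so $H$ is a forest on exactly $2k+1$ vertices with $2k$ edges, hence a \emph{tree}. Since every $A$-vertex has degree $2$, a tree structure forces $H$ to be a path whose two endpoints lie in $B$ and whose internal vertices alternate $B, A, B, A, \dots$; reading this path off as a walk in the $2n$-cycle, consecutive vertices differ by $\pm 1$, and because the $A$-vertices are exactly the odd ones visited, the path has the form $a_0 - 1, a_0, a_0 + 1, a_0 + 2, a_0 + 3, \dots$ — i.e.\ it is an interval of consecutive integers in $\mathbb{Z}_{2n}$, of which the odd ones are precisely $A$. Thus $A = \{a, a+2, a+4, \dots, a+2(k-1)\}$ for $a = a_0$ (after relabelling). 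I expect the main obstacle to be the careful handling of the edge case $k = n$ and the degenerate wrap-around behaviour when $2k$ approaches $2n$: one must make sure the "path in $H$ is an interval" deduction does not secretly allow the interval to close up into all of $\mathbb{Z}_{2n}$, which is exactly where the hypothesis $k \leq \tfrac{n}{2}$ (rather than merely $k < n$) does real work — it guarantees the interval of $2k - 1 \leq n - 1$ consecutive integers cannot wrap around, so the structural description is genuinely an arithmetic progression with common difference $2$ and not a coset of a subgroup.
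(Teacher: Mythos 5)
Your proof is correct, and it takes a genuinely different route from the paper's. The paper proves both bullets by reduction to Lemma \ref{lemmaNum}: for the first bullet it notes $|B|\geq|A|$ trivially and rules out equality via $A=A+2$; for the second it peels off a maximal run $S\subsetneq A$ whose neighbourhood is ``saturated'' and applies Lemma \ref{lemmaNum} to the remainder $A'=A\setminus S$ to get a contradiction. You instead observe that for $d=1$ the auxiliary graph $H$ is a subgraph of the $2n$-cycle on $\mathbb{Z}_{2n}$, so that (i) any cycle component would be the entire $2n$-cycle, forcing $k=n$ and contradicting $k\leq\frac n2$, hence $H$ is a forest and $|A|+|B|\geq 2k+1$; and (ii) in the equality case $H$ is a tree of maximum degree $2$ inside a cycle, hence a path, hence an arc of consecutive residues whose odd elements are exactly $A$. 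This is a clean, unified argument: both bullets fall out of the single observation that proper connected subgraphs of a cycle are paths, and the structural conclusion of the second bullet is immediate rather than requiring the paper's somewhat terse peeling step. What the paper's route buys is reuse of Lemma \ref{lemmaNum}, which is stated for general $d$ (and is needed elsewhere with $d=\ell-2$ and $d=\ell-3$), whereas your cycle-subgraph picture is special to $d=1$. Two small remarks: like the paper's proof, yours implicitly needs $k\geq 1$ (for $k=0$ the first bullet is false as stated); and your final worry about wrap-around is already subsumed by acyclicity --- once $H$ is known to be a tree inside $C_{2n}$ it is automatically an arc, so no separate check that the interval ``cannot close up'' is required.
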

\begin{proof}
For each $a \in A$, clearly $a+1 \in A+1$, so $|B| \geq |A|$. If $|B| = |A|$, then by Lemma \ref{lemmaNum}, $A = A + 2$, from which it follows that $|A| = n$, a contradiction. 

Suppose $|B| = |A| + 1$. Suppose there exists a set $S \subsetneq A$ of size $t$ of the form $\{a, a+2, \dots, a+2(t-1)\}$, and suppose that for each element $x \in (S + 1) \cup (S - 1)$, $\{x+1, x-1\} \cap A \subseteq S$. Then $|(S+ 1) \cup (S - 1)| = t+1$, and letting $A' = A \setminus S$, we have that $|(A' + 1) \cup (A' - 1)| = |A'|$. However, then it follows again from Lemma \ref{lemmaNum} that $|A'| = n$, a contradiction.
\end{proof}

We now consider the graph family $\mathcal G$. Our first aim will be to show that $\mathcal{G}$ contains a partial transversal isomorphic to a cycle of length $2n - 2$, which will be established by the next two claims.

\begin{claim}
$\mathcal{G}$ contains a partial transversal isomorphic to either a cycle of length $2n - 2$ or the disjoint union of a cycle of length $2n - 4$ and a $K_2$.
\label{lemma2n4}
\end{claim}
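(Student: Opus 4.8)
\textbf{Proof proposal for Claim \ref{lemma2n4}.}

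The plan is to reuse the machinery already built up for Theorem \ref{mainThm}, since the degree hypotheses here are identical. In particular, by Claim \ref{claim2n2} the family $\mathcal G$ already contains a partial transversal isomorphic to the disjoint union of a cycle $C$ of length $2n-2$ and a $K_2$; write $V(K_2) = \{x,y\}$ with $x$ red and $y$ blue, let $C$ have vertex sequence $(v_1, v_2, \dots, v_{2n-2}, v_1)$ with $v_1$ blue, and let $\phi\colon E(C)\cup E(K_2) \to [2n]$ be the associated injective function, missing exactly one index which I will call $2n$ (so $\phi(v_iv_{i+1}) = i$ after relabeling, and $\phi(xy) = 2n-1$, exactly as in the proof of Theorem \ref{mainThm}). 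If $\mathcal G$ already has a partial transversal isomorphic to a cycle of length $2n-2$ we are done, so I may assume it does not; in particular $\mathcal G$ has no Hamiltonian transversal either (a Hamiltonian transversal contains a $2n-2$ cycle after deleting two consecutive edges on opposite parity$\ldots$ more directly, the results of the previous section only ever produced a $2n-2$ cycle as a substep, so I can freely invoke everything proved there). The goal is then to build, from $C$ and the extra edge $xy$, a $K_2$ together with a cycle on the remaining $2n-4$ vertices.

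The key step is an augmentation/rotation argument on $C$ using the two ``free'' graphs, the one of index $2n$ and the one whose edge of $C$ we choose to discard. Concretely, I would pick a $G_{2n}$-neighbor $v_{j+1}$ of $x$ and a $G_{2n}$-neighbor (or rather use $x,y$ jointly) so as to try to splice the path $x,y$ into $C$; the obstruction to getting a full $2n-2$ cycle or a Hamiltonian transversal is, by the same in-degree/out-degree counting as in Claim \ref{claimY} and the paragraph after it (inequality $(\star)$), that the relevant neighborhoods are forced to sit on an arithmetic-progression-like pattern around the cycle. This is exactly the situation Lemmas \ref{lemmaNum} and \ref{lemmaPath} are designed for: translating the degree bounds into statements about subsets $A \subseteq \mathbb{Z}_{2n-2}$ with $|(A+1)\cup(A-1)|$ small, one concludes that $A$ is a block of consecutive elements of the same parity. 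Having two neighborhood sets both forced to be such blocks, I then locate a pair of cycle vertices $v_k, v_{k+1}$ at which $C$ can be ``cut,'' the two endpoints $v_k$ and (say) $v_{k+3}$ rerouted through $x$ and $y$, and the two vertices $v_{k+1}, v_{k+2}$ left over forming the desired $K_2$ — provided $v_{k+1}v_{k+2}$ can be realized by one of the two available graph indices, which the block structure guarantees.

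I would organize the argument as two cases mirroring Case 1 and Case 2 of the proof of Theorem \ref{mainThm}, according to whether some blue vertex of $V(C)$ has in-degree $\geq \tfrac{n}{2}$ in the auxiliary digraph $H$ or all blue vertices have in-degree exactly $\tfrac{n}{2}-\tfrac12$ (which, as before, forces $n$ odd and gives the stronger count $a \geq \tfrac{n}{2}$ on the number of such vertices). In the first case the extra slack lets a straightforward pigeonhole (as in the $I_j \cap I^-$ argument) produce either a Hamiltonian transversal, a $2n-2$ cycle, or the $2n-4$ cycle plus $K_2$; in the tight case the slack is gone and this is where Lemma \ref{lemmaPath} does the real work, pinning down the neighborhood as an exact progression and leaving just enough room to extract the $(2n-4)+K_2$ configuration.

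\textbf{Main obstacle.} The hard part is the tight case: when every relevant neighborhood has exactly its minimum size, a naive pigeonhole gives no common element, and one must exploit the precise arithmetic-progression structure from Lemma \ref{lemmaPath} (together with the parity constraints: red–blue alternation on $C$ means all indices in sight have a fixed parity in $\mathbb{Z}_{2n-2}$) to show the two blocks must overlap in a usable way, and that the leftover pair of vertices is adjacent in one of the two free graphs. Getting the indices/parities to line up so that the spliced structure is genuinely a valid partial transversal (each graph used at most once) — rather than merely a subgraph — is the bookkeeping that will take the most care.
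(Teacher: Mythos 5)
There is a genuine gap, and it sits at the very first step. You invoke Claim \ref{claim2n2} as if it were an unconditional fact, but every claim in Section 2 (including Claim \ref{claim2n2}) is proved under the standing hypothesis, announced at the start of that section, that $\mathcal G$ has \emph{no} Hamiltonian transversal --- a hypothesis that Theorem \ref{mainThm} shows is never satisfied. So Claim \ref{claim2n2} cannot be cited here. Your fallback justification is the parenthetical assertion that ``a Hamiltonian transversal contains a $2n-2$ cycle after deleting two consecutive edges,'' which is false: deleting two consecutive edges from a $2n$-cycle leaves a path on $2n-1$ vertices plus an isolated vertex, not a cycle. Closing that path back up into a $(2n-2)$-cycle requires a chord realized by one of the two graph indices not yet used, and producing such a chord is precisely the nontrivial content of Claims \ref{lemma2n4} and \ref{lemma2n2}. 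Your reduction ``no $(2n-2)$-cycle transversal $\Rightarrow$ no Hamiltonian transversal $\Rightarrow$ Section 2 applies'' is therefore circular: the first implication is (the contrapositive of) essentially what you are being asked to prove.

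A secondary sign that the argument has gone off course: if Claim \ref{claim2n2}'s conclusion really were available, Claim \ref{lemma2n4} would follow in one line by discarding the $K_2$, so the elaborate splicing and Lemma \ref{lemmaPath} machinery in your second and third paragraphs would be unnecessary --- and indeed the paper does not use Lemmas \ref{lemmaNum} or \ref{lemmaPath} for this claim at all (they enter only in Claim \ref{lemma2n2} and the final length-$\ell$ argument). The paper's actual proof starts from the Hamiltonian transversal guaranteed by Theorem \ref{mainThm}, builds the auxiliary digraph $H$ on its $2n$ vertices, takes a blue vertex $v_1$ with $d^-(v_1)\geq \frac{n}{2}-\frac{1}{2}$, and pairs each in-arc $v_jv_1$ with a potential edge $v_2v_{j+3}\in E(G_1)$ (plus the two special targets $v_{k+3},v_{k+5}$ for the largest such $j=k$); a pigeonhole against the $\frac{n}{2}+\frac{1}{2}$ guaranteed $G_1$-neighbors of $v_2$ then yields one of the two desired configurations. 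If you want to salvage your write-up, you need to replace the appeal to Claim \ref{claim2n2} with an argument of this kind that genuinely shortens the Hamiltonian cycle by two vertices.
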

\begin{proof}
By Theorem \ref{mainThm}, $\mathcal{G}$ contains a partial transversal $C$ isomorphic to a Hamiltonian cycle. Let $C$ have a vertex sequence $(v_1, v_2, \dots, v_{2n}, v_1)$ and an associated bijective function $\phi:E(C) \rightarrow [2n]$. Again, we let $v_2, v_4, \dots, v_{2n}$ be red vertices, and we let $v_1, v_3, \dots, v_{2n-1}$ be blue vertices. 

Without loss of generality, we may assume that for each $v_i \in V(C)$, $\phi(v_i v_{i+1}) = i$, where $v_{2n + 1}$ is identified with $v_1$. We will again define an auxiliary digraph $H$ on $V(C)$. For each red vertex $v_i \in V(C)$, and for each edge $v_i v_j \in E(G_i)$ with $j \neq i+1$, we add the arc $v_i v_j$ to $H$. As each red vertex of $H$ has out-degree at least $\frac{n}{2} - \frac{1}{2}$, it follows that some blue vertex of $H$ has in-degree at least $\frac{n}{2} - \frac{1}{2}$. We may assume without loss of generality that $d^-(v_{1}) \geq \frac{n}{2} - \frac{1}{2}$. We note that if $v_{2n - 2} v_{1}$ is an arc of $H$, then $\mathcal{G}$ contains a partial transversal isomorphic to a cycle of length $2n - 2$ with a vertex sequence $(v_1, v_2, v_3, \dots, v_{2n-3}, v_{2n-2}, v_1)$. Otherwise, we assume that $v_{2n - 2} v_{1}$ is not an arc of $H$. Similarly, we may assume that $v_{2n - 4} v_{1}$ is not an arc of $H$.

Let $k$ be the largest integer for which $v_{k}v_{1}$ is an arc of $H$. We may assume that $k$ is even and $k \leq 2n - 6$. We note that if $v_2 v_{k+3} \in E(G_{1})$ or $v_2 v_{k+5}\in E(G_{1})$, then $\mathcal{G}$ contains a partial transversal isomorphic to one of our desired graphs. Furthermore, for each other arc $v_j v_{1}$ in $H$, if $v_2 v_{j+3} \in E(G_{1})$, then $\mathcal{G}$ contains a partial transversal isomorphic to a cycle of length $2n - 2$. As $d^-(v_{1}) \geq \frac{n}{2} - \frac{1}{2}$, there exist at least $\frac{n}{2} + \frac{1}{2}$ vertices $v_j \in V(C)$ for which the edge $v_2 v_j \in E(G_{1})$ implies a partial transversal isomorphic to one of our desired graphs. As $v_2$ has at least $\frac{n}{2} + \frac{1}{2}$ neighbors via $E(G_{1})$, it then follows that there must exist some edge $v_2 v_j \in E(G_{1})$ that belongs to a partial transversal in $\mathcal{G}$ isomorphic to a cycle of length $2n - 2$ or the disjoint union of a cycle of length $2n - 4$ and a $K_2$.
\end{proof}
\begin{claim}
$\mathcal{G}$ contains a partial transversal isomorphic to a cycle of length $2n - 2$.
\label{lemma2n2}
\end{claim}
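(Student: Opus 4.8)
\textit{Proof proposal.}
The plan is to take the structure supplied by Claim \ref{lemma2n4} and ``augment'' it into a cycle of length $2n-2$ by inserting the spare edge, in the manner of the left picture of Figure \ref{figAug}. By Claim \ref{lemma2n4} we may assume that $\mathcal G$ has a partial transversal consisting of a cycle $C = (v_1, v_2, \dots, v_{2n-4}, v_1)$ of length $2n-4$ together with a vertex-disjoint edge $K$ on a vertex set $\{x,y\}$, since otherwise $\mathcal G$ already contains a partial transversal isomorphic to a cycle of length $2n-2$ and we are done. I would index so that the bijection $\phi:E(C)\to[2n-4]$ satisfies $\phi(v_iv_{i+1})=i$ (indices of $C$ read modulo $2n-4$) and so that $\phi(xy)=2n-3$; thus the three values $2n-2,2n-1,2n$ are missed. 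As before take $v_2,v_4,\dots,v_{2n-4}$ and $x$ to be red and $v_1,v_3,\dots,v_{2n-5}$ and $y$ to be blue, so that exactly one red vertex and one blue vertex of $X$ lie outside $V(C)\cup\{x,y\}$. The goal is to find an edge $v_iv_{i+1}$ of $C$ (with $v_i$ blue and $v_{i+1}$ red, say) together with edges $v_ix$ and $v_{i+1}y$ lying in three pairwise distinct graphs, none of which is among the $2n-5$ graphs $\{G_j: j\in[2n-4],\ j\neq i\}$ used by the rest of $C$; then $C - v_iv_{i+1} + v_ix + xy + yv_{i+1}$ is a partial transversal isomorphic to a cycle of length $2n-2$. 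The graphs available for $v_ix$ and $v_{i+1}y$ are the freed graph $G_i$ together with the missed graphs $G_{2n-2}, G_{2n-1}, G_{2n}$, so there is slack.

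Concretely I would draw $xy$ from $G_{2n-3}$, draw $v_ix$ from $G_{2n-2}$, and seek $v_{i+1}y$ in $G_{2n}$, $G_{2n-1}$, or the freed graph $G_i$, and I would also use the symmetric move that deletes $v_{i-1}v_i$ instead of $v_iv_{i+1}$. Writing $D_x = N_{G_{2n-2}}(x)\cap V(C)$, the red-degree condition gives $|D_x|\geq \lfloor n/2\rfloor - 1$, and Lemma \ref{lemmaPath}, applied to the index set of $C$ (a copy of $\mathbb Z_{2(n-2)}$), shows that the set of red vertices of $C$ adjacent on $C$ to $D_x$ has size at least $|D_x|+1 \geq \lfloor n/2\rfloor$ (for large $|D_x|$ one instead applies the lemma to a subset of $D_x$ of the appropriate size). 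Since $|N_{G_{2n}}(y)\cap V(C)| \geq \lceil n/2\rceil - 2$ and $C$ has only $n-2$ red vertices, a pigeonhole count forces $N_{G_{2n}}(y)\cap V(C)$ to meet the set of red neighbours of $D_x$, which yields a suitable index $i$ — except in a tightly constrained extremal configuration where all of these inequalities hold with equality.

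In that extremal configuration the equality case of Lemma \ref{lemmaPath} (equivalently Lemma \ref{lemmaNum}) forces $D_x$ to be an ``interval'' $\{v_a, v_{a+2}, v_{a+4}, \dots\}$ along $C$, forces $N_{G_{2n}}(y)\cap V(C)$ to be exactly the complementary arc of red vertices, and pins down the neighbourhoods of $x$ and of $y$ in the remaining spare graphs to their extreme possibilities; in particular it forces $xy\in E(G_{2n})$. I would then exploit this by realizing $xy$ via $G_{2n}$ instead of $G_{2n-3}$, thereby freeing $G_{2n-3}$, and re-run the same counting with $G_{2n-3}$ now playing the role of the third graph for $v_{i+1}y$; repeating this extracts still more rigid structure until a contradiction appears. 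An alternative finish is to first perform a single rotation of $C$ across a spare or freed graph — exactly the kind of move driving the auxiliary-digraph argument in the proof of Theorem \ref{mainThm} — so as to destroy the interval structure of $D_x$, and then augment. I expect this tight-case analysis to be the main obstacle: the generic case is a one-paragraph pigeonhole argument, but carefully tracking which graph each edge is drawn from, and ruling out the rigid interval configuration via Lemmas \ref{lemmaNum} and \ref{lemmaPath}, requires care, and the smallest values of $n$ (where several inequalities degenerate) would be checked by hand, as elsewhere in the paper.
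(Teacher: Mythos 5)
Your generic case is right and matches the first subcase of the paper's argument: insert the path $v_i$--$x$--$y$--$v_{i+1}$ in place of an edge of $C$, and a pigeonhole count on $N(x)\cap V(C)$ and $N(y)\cap V(C)$ over two of the spare graphs produces the required adjacent pair unless every inequality is tight. But the tight case is where the actual content of this claim lives, and your two proposed finishes do not close it. Re-realizing $xy$ via $G_{2n}$ and re-running the count with $G_{2n-3}$ in the third slot only yields the same extremal conclusion for a different pair of spare graphs; the configuration in which $N_{G_m}(x)\cap V(C)$ is the \emph{same} interval $A$ of blue vertices and $N_{G_m}(y)\cap V(C)$ is the \emph{same} complementary arc of red vertices for every spare graph $G_m$ (for instance, when all the relevant graphs coincide near $x$ and $y$) satisfies the degree hypotheses and survives every such re-count, so ``repeating this extracts still more rigid structure until a contradiction appears'' is not a proof. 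The unspecified ``rotation'' is likewise not an argument. The missing idea is that in the tight case you must change the \emph{shape} of the augmentation: equality in the count for $y$ forces the spare red vertex $z\notin V(C)\cup\{x,y\}$ to be a neighbour of $y$ in the relevant spare graph, and the equality case of Lemma \ref{lemmaPath} forces $A$ to be an interval, which guarantees at least $|A|+2\geq \frac n2+\frac12$ vertices of $C$ at distance exactly $2$ from $A$ along $C$. One then deletes a single vertex $v_{j+1}$ of $C$ and replaces it by the path $x$--$y$--$z$ (the right-hand picture of Figure \ref{figAug}), attaching $x$ to $v_j\in A$ and $z$ to $v_{j+2}$; your plan never uses $z$ or $w$ as cycle vertices, so it cannot reach this cycle.

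A secondary gap: for $n=4$ the quantity $\lceil n/2\rceil-2$ is $0$, so $y$ may have no neighbours in $V(C)$ via any spare graph and your count says nothing; this is not a routine ``check by hand'' but requires a separate argument (the paper spends an entire case establishing that a large collection of edges are wildcard edges before assembling the $6$-cycle).
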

\begin{proof}
We may assume from Claim \ref{lemma2n4} that $\mathcal{G}$ contains a partial transversal isomorphic to the disjoint union of a cycle of length $2n - 4$ and a $K_2$.

Let $\mathcal{G}$ contain a partial transversal isomorphic to the disjoint union of a cycle $C$ of length $2n - 4$ and a graph $K \cong K_2$. Let $C$ have a vertex sequence $(v_1, \dots, v_{2n-4}, v_1)$. (Throughout the entire argument, we will identify $v_{2n-3}$ and $v_1$.) Let $V(K) = \{x,y\}$, and let $X \setminus (V(C) \cup V(K)) = \{w,z\}$. Let $ [2n] \setminus \im(\phi) = \{m_1, m_2, m_3\}$. We assume without loss of generality that $x$ and $z$ are red vertices and that $y$ and $w$ are blue vertices. \\ \\
\textbf{Case 1:} $n = 4$ 

In this first case, we seek a partial transversal isomorphic to a cycle of length $6$.
We assume without loss of generality that our red vertices are $v_2, v_4, x, z$; hence, our blue vertices are $v_1, v_3, y, w$. For our injective function $\phi:E(C) \cup E(K) \rightarrow [8]$, we assume without loss of generality that $\phi(v_1v_2) = 1$, $\phi(v_2v_3) = 2$, $\phi(v_3v_4) = 3$, $\phi(v_4v_1) = 4$, and $\phi(xy) = 5$. Hence, $[8] \setminus \im(\phi) = \{6,7,8\}$. We seek a partial transversal over $\mathcal G$ isomorphic to a cycle of length $6$.

For this case, we define a \emph{wildcard edge} as an edge $e$ that belongs to $E(G_i)$ for at least $2n - 2 = 6$ values $i \in [8]$. The idea behind this definition is that a wildcard edge may extend any partial transversal with fewer than six edges. We aim to show that if no partial transversal isomorphic to a $6$-cycle exists in $\mathcal G$, then each edge in Figure \ref{fign4} is a wildcard edge.
We observe that $x$ has a neighbor in $\{v_1, v_3\}$ via each of $G_{6}, G_{7}, G_{8}$. If $v_2y \in E(G_{6}) \cup E(G_{7}) \cup E(G_{8})$, then $\mathcal G$ contains a partial transversal isomorphic to a $6$-cycle; hence, we assume that $v_2y \not \in E(G_{6}) \cup E(G_{7}) \cup E(G_{8})$. We may similarly assume that $v_4 y \not \in E(G_{6}) \cup E(G_{7}) \cup E(G_{8})$. It follows that for any edge $e \in E(C)$, we may arbitrarily redefine $\phi(e) = m$ for each $m \in \{6,7,8\}$ while letting $\phi$ still satisfy the conditions of a partial transversal's injective function. Then, by redefining $\phi$ and repeating the same argument at some edge $e \in E(C)$ for each value $m \in \{6,7,8\}$, we may conclude that each edge in $V(C)$ is a wildcard edge, along with the two edges incident to $w$ in Figure \ref{fign4}. Additionally, if there exist two distinct $i,j \in [8]$ for which $v_1 z \in E(G_i)$ and $v_3 z \in E(G_j)$, then $\mathcal G$ contains a partial transversal isomorphic to a $6$-cycle, so we may assume that $yz$ is also a wildcard edge. We may similarly conclude that $xy$ is a wildcard edge. Hence, every edge in Figure \ref{fign4} is a wildcard edge.

Now, we recall that for each $i \in [8]$, $x$ has a neighbor in $\{v_1, v_3\}$ via $G_i$, and we hence note that due to our wildcard edges, if $wz \in E(G_j)$ for any $j \in [8]$, then $\mathcal G$ contains a partial transversal isomorphic to a cycle of length $6$. Therefore, for each value $j \in [8] $, we assume that $zv_1, zv_3 \in E(G_j)$. However, then $\mathcal G$ contains a partial transversal isomorphic to a $6$ cycle using the vertices $x$, $y$, $z$, and three vertices of $V(C)$. Thus, the claim holds in this case. \\ \\
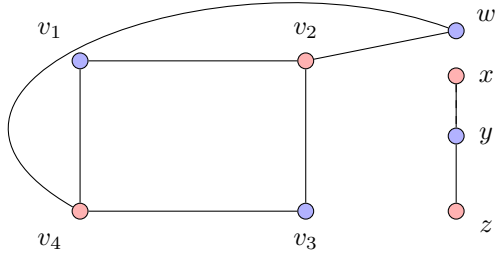
\begin{figure}
\begin{tikzpicture}
[scale=2,auto=left,every node/.style={circle,fill=gray!30,minimum size = 6pt,inner sep=0pt}]
\node (v1) at (-1.5,1) [draw = black, fill = blue!30] {};
\node (v2) at (0,1) [draw = black, fill = red!30] {};
\node (v3) at (0,0) [draw = black, fill = blue!30] {};
\node (v4) at (-1.5,0) [draw = black, fill = red!30] {};

\node (z) at (1,1.2) [draw = black, fill = blue!30] {};
\node (x) at (1,0.9) [draw = black, fill = red!30] {};
\node (y) at (1,0.5) [draw = black, fill = blue!30] {};
\node (w) at (1,0) [draw = black, fill = red!30] {};

\node (zzzz) at (-1.7,1.2) [fill = white]  {$v_{1}$};
\node (zzzz) at (0,1.2) [fill = white]  {$v_{2}$};
\node (zzzz) at (0,-0.2) [fill = white]  {$v_{3}$};
\node (zzzz) at (-1.7,-0.2) [fill = white]  {$v_{4}$};
\node (zzzz) at (1.2,1.3 ) [fill = white]  {$w$};
\node (zzzz) at (1.2,0.9 ) [fill = white]  {$x$};
\node (zzzz) at (1.2,0.5 ) [fill = white]  {$y$};
\node (zzzz) at (1.2,-0.1 ) [fill = white]  {$z$};

\draw (z) to [out=160,in=150,looseness = 01.50] (v4);
\draw[dashed] (x) to (y);

\foreach \from/\to in {v1/v2,v2/v3,v3/v4,v4/v1,v2/z,y/w,x/y}
    \draw (\from) -- (\to);

\end{tikzpicture}
\caption{The figure shows the vertices of $X$ as considered in Lemma \ref{lemma2n2} in the case $n = 4$. The aim in the proof of this case of Lemma \ref{lemma2n2} is to show that each edge shown in the figure is a \emph{wildcard edge}---that is, an edge that belongs to at least $2n - 2 = 6$ graphs of $\mathcal G$. As these wildcard edges may extend any partial transversal of length less than $6$, they are useful for finding partial transversals over $\mathcal G$.}
\label{fign4}
\end{figure}
  \\ 
\textbf{Case 2:} $n \geq 5$ 

In this second case, we seek a partial transversal isomorphic to a cycle of length $2n - 2$. 
We let 
$$A:=\{v \in V(C): vx \in E(G_{m_1})\},$$
and we let $N_C(A) \subseteq V(C)$ denote the set of vertices adjacent to $A$ in $C$. We observe that $|A| \geq \frac{n}{2} - \frac{3}{2}$. If $|N_C(A)| \geq \frac{n}{2} + \frac{1}{2}$, then at most $\frac{n}{2} - \frac{5}{2}$ red vertices belong to $V(C) \setminus N_C(A)$. As $y$ has at least $\frac{n}{2} - 2$ neighbors in $V(C)$ via the graph $G_{m_2}$, there must exist vertices $v_j, v_{j+1} \in V(C)$ such that $x$ has a neighbor in $\{v_j, v_{j+1}\}$ via the graph $G_{m_1}$ and $y$ has a neighbor in $\{v_j, v_{j+1}\}$ via the graph $G_{m_2}$. Then we may find a partial transversal over $\mathcal{G}$ isomorphic to a cycle of length $2n - 2$ containing the vertices $V(C) \cup \{x,y\}$.

Otherwise, suppose $|N_C(A)| \leq \frac{n}{2}$.
If $y$ has at least $\frac{n}{2} - \frac{3}{2}$ neighbors in $V(C)$ via $G_{m_2}$, then by a similar argument, we may find a partial transversal over $\mathcal{G}$ isomorphic to a cycle of length $2n - 2$ containing the vertices $V(C) \cup \{x,y\}$. Hence, we assume instead that $y$ has exactly $\frac{n}{2} - 2$ neighbors in $V(C)$ via $G_{m_2}$ and hence that $yz \in E(G_{m_2})$.
Then we apply Lemma \ref{lemmaPath}, which tells us that $A$ is of the form 
$$A = \{v_i, v_{i+2}, v_{i+4}, \dots, v_{i - 2+ 2\lceil \frac{n-3}{2}\rceil}\},$$ for some $v_i \in V(C)$. Then, as $n \geq 5$, it follows that there exist at least $|A| + 2 \geq \frac{n}{2} + \frac{1}{2}$ vertices in $C$ at a distance of exactly $2$ from a vertex of $A$. Hence, as $z$ has at least $\frac{n}{2} - \frac{3}{2}$ neighbors in $V(C)$ via the graph $G_{m_2}$, $z$ must have some neighbor in $C$ via $G_{m_2}$ that is at distance exactly $2$ from a vertex of $A$. Therefore, there exist two vertices $v_j, v_{j+2} \in V(C)$ such that $x$ has a neighbor in $\{v_j, v_{j+2}\}$ via the graph $G_{m_1}$ and $z$ has a distinct neighbor in $\{v_j, v_{j+2}\}$ via $G_{m_2}$. Therefore, $\mathcal{G}$ contains a partial transversal isomorphic to a cycle of length $2n - 2$ on the vertex set $V(C) \setminus \{v_{j+1}\} \cup \{x,y,z\}$.
\end{proof}

We have finally established that $\mathcal{G}$ contains a partial transversal isomorphic to a cycle of length $2n - 2$. Now we suppose, for the sake of contradiction, that $\mathcal{G}$ does not contain a partial transversal isomorphic to a cycle of length $\ell$, for some even integer $4 \leq \ell \leq n-4$. Let $C$ be a partial transversal over $\mathcal{G}$ isomorphic to a cycle of length $2n - 2$ with an associated injective function $\phi:E(C) \rightarrow [2n]$. We let $C$ have a vertex sequence $(v_1, \dots, v_{2n-2}, v_1)$. We let $X \setminus V(C) = \{x,y\}$. For each $i \in [2n-2]$, we assume without loss of generality that $\phi(v_iv_{i+1}) = i$, with $v_{2n - 1}$ and $v_1$ identified. Then it follows that $[2n] \setminus \im(\phi) = \{2n-1, 2n\}$. We again let our red vertices be $x,v_2, v_4, \dots, v_{2n-2}$, and we let our blue vertices be $y,  v_1, v_3, \dots, v_{2n-3}$.

For the sake of convenience, we will endow $V(C)$ with some group operations. For a vertex $v_i \in V(C)$ and an integer $k$, we define $v_i + k = v_{i+k}$, again with $v_{2n - 1}$ ``overflowing" to $v_1$. By this definition, $V(C)$ is isomorphic as a group to $(\mathbb{Z}_{2n-2},+)$. For a set $A \subseteq V(C)$ and an integer $k$, we define $A+k = \{v+k:v \in A\}$. We make several observations.

\begin{claim}
\label{claimAnyColor}
For each $v_j \in V(C)$, $v_j v_{j+1} \in E(G_{2n-1})$ or $v_j v_{j+1} \in E(G_{2n})$.
\end{claim}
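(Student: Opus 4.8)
The plan is to argue by contradiction: suppose that for some index $k$ the edge $v_k v_{k+1}$ lies in neither $E(G_{2n-1})$ nor $E(G_{2n})$, and produce a partial transversal isomorphic to a cycle of length $\ell$, contradicting the standing assumption that no such cycle exists. Deleting $v_k v_{k+1}$ from $C$ frees the colour $k$, so we then have the three spare colours $k$, $2n-1$, $2n$ to play with, the two spare vertices $x$ (red) and $y$ (blue), and — crucially — the information that $v_k$ and $v_{k+1}$ retain their full minimum-degree quotas of neighbours in $G_{2n-1}$ and in $G_{2n}$ among vertices other than $v_{k+1}$, resp.\ $v_k$.

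The $\ell$-cycle will be assembled from a short sub-arc $v_a, v_{a+1}, \dots, v_b$ of $C$ (of $\ell-1$, $\ell-2$, or $\ell-3$ vertices, depending on the variant) together with one, two, or three ``chords'' drawn from $G_{2n-1}$, $G_{2n}$, and $G_k$, using the spare vertices $x$ or $y$ when needed. The simplest variant: if $x$ has a blue neighbour $v_a$ via $G_{2n-1}$ and a blue neighbour $v_{a+\ell-2}$ via $G_{2n}$, then $v_a\, v_{a+1}\cdots v_{a+\ell-2}\, x\, v_a$ is already a partial transversal isomorphic to an $\ell$-cycle. More generally one also allows chords $v_{k+1}v_a \in E(G_{2n})$ and $v_b v_k \in E(G_{2n-1})$ that short-circuit $C$ near the deleted edge while the surviving edge $v_k v_{k+1}$ carries the freed colour $k$. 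In each case the existence of a suitable arc reduces, after a cyclic shift by $\ell-2$ or $\ell-3$ inside $\mathbb{Z}_{2n-2}$, to showing that two neighbourhoods — each of size at least roughly $n/2$ inside a colour class of $n-1$ vertices of $C$ — must intersect, which is immediate from the pigeonhole principle outside the extremal regime.

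I expect the main obstacle to be exactly that extremal regime, where the two relevant neighbourhoods have total size exactly $n-1$ and so can fail to intersect by partitioning the colour class of $C$; this is essentially the only delicate case, and it arises only when $n$ is odd (when $n$ is even the count is already strict). There I would invoke Lemmas \ref{lemmaNum} and \ref{lemmaPath}: a pair of neighbourhoods tiling a colour class of $C$ is forced to be an arithmetic progression of common difference $2$, and one further deduces (from the failure of the $x$-only variant) that $xy$ is an edge of every spare graph. This rigidity, combined with the freedom to choose which of $G_{2n-1}$, $G_{2n}$, $G_k$ plays which role and with the two spare vertices, lets one exhibit the $\ell$-cycle by hand. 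The remaining care is bookkeeping: branching on the parities of $n$ and $\ell$, and ensuring the chosen arc avoids the edge whose colour is being reused, as well as the vertices $v_k$, $v_{k+1}$ when they serve as chord endpoints.
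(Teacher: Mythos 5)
Your overall plan (assume $v_jv_{j+1}$ misses both $G_{2n-1}$ and $G_{2n}$, then build an $\ell$-cycle from a sub-arc of $C$ plus chords) is the right shape, and your second configuration---chords $v_bv_j\in E(G_{2n-1})$ and $v_{j+1}v_a\in E(G_{2n})$ with the edge $v_jv_{j+1}$ still carrying colour $j$---is exactly the configuration the paper uses. But the way you propose to close the argument has a genuine gap. The single-shift pigeonhole you describe gives $|N_{G_{2n-1}}(v_j)\cap V(C)|+|N_{G_{2n}}(v_{j+1})\cap V(C)|\geq n-1$ against an ambient blue class of size $n-1$, so it can fail by exactly one; worse, the pairs you must discard because the arc would swallow $v_j$ or $v_{j+1}$ are not mere bookkeeping---discarding them destroys the count. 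The paper's fix is purely combinatorial: it pairs \emph{every} candidate edge $v_jv_k$ with a candidate edge at $v_{j+1}$, using offset $k-\ell+3$ when the resulting arc avoids $v_j,v_{j+1}$ and the alternative offset $k-\ell+1$ (a cycle that does \emph{not} use the edge $v_jv_{j+1}$) when it would not. This makes the pairing a perfect matching on the $n-2$ candidate positions, forces $|N_{G_{2n-1}}(v_j)\cap V(C)|+|N_{G_{2n}}(v_{j+1})\cap V(C)|\leq n-2$, hence $d_{G_{2n-1}}(v_j)+d_{G_{2n}}(v_{j+1})\leq n$, contradicting the degree hypotheses. No rigidity lemma, no case split on the parity of $n$, and no use of $x$ or $y$ is needed.

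Your proposed escape from the extremal regime---Lemmas \ref{lemmaNum}/\ref{lemmaPath} plus the fact that $xy$ is a wildcard edge---does not work at this stage of the proof. The wildcard property of $xy$ for colours other than $2n-1$ and $2n$ is Claim \ref{claimAllColors}, whose proof relies on Claim \ref{claimAnyColor} (it is what allows $\phi$ to be redefined so as to miss an arbitrary colour), so invoking it here is circular. Even granting $xy\in E(G_{2n-1})\cap E(G_{2n})$, which can be derived independently, the three-chord configuration through $x$ and $y$ consumes three spare colours while only two are available before Claim \ref{claimAnyColor} is established; the colour $j$ ``freed'' by your hypothesis is of no help, since nothing is known about which edges other than $v_jv_{j+1}$ lie in $G_j$. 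This is precisely why the paper proves the claim first by the self-contained pairing argument and only afterwards runs the $A=A+(2\ell-4)$, $A=A+(2\ell-6)$ rigidity argument you are anticipating.
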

\begin{proof}
To prove this claim, we will apply an idea of Bondy \cite{Bondy} which is commonly used in proving pancyclicity results (c.f. \cite{Cheng}, \cite{Schmeichel}). Suppose that $v_j v_{j+1} \not \in E(G_{2n-1})$ and $v_j v_{j+1} \not \in E(G_{2n})$. For edges in $C$ incident to $v_j$ and $v_{j+1}$, we will create the following pairings for each $k \in [2n - 2] \setminus \{j+1\}$ of parity opposite $j$:
\begin{itemize}
\item If $j \not \in \{k-\ell + 3, k - \ell+4, \dots, k-2, k-1\}$, then we pair $v_jv_k$ with $v_{j+1} v_{k-\ell+3}$. 
\item If $j \in \{k-\ell + 3, k - \ell+4, \dots, k-2, k-1\}$, then we pair $v_j v_k$ with $v_{j+1} v_{k-\ell+1}$. 
\end{itemize}
It is clear from Figure \ref{figBondy} that if $\mathcal{G}$ does not contain a partial transversal isomorphic to a cycle of length $\ell$, then for any pair $(a,b)$ given above, if $a \in E(G_{2n-1})$, then $b \not \in E(G_{2n})$. One may also check that each edge incident to $v_j$ is paired with exactly one edge incident to $v_{j+1}$, and one may also check that no edge $v_j v_k$ is paired with $v_{j+1} v_j$.
Therefore, for the pair $v_j, v_{j+1}$, 
$$|N_{G_{2n-1}}(v_j) \cap V(C)| + |N_{G_{2n}}(v_{j+1}) \cap V(C)| \leq n-2.$$
 Then, as each of $v_j, v_{j+1}$ has at most one neighbor outside of $V(C)$ via any graph, it follows that $d_{G_{2n-1}}(v_j) + d_{G_{2n}}(v_{j+1})  \leq n$, a contradiction.
\end{proof}

\begin{figure}
\begin{tikzpicture}
[scale=2.2,auto=left,every node/.style={circle,fill=gray!30,minimum size = 6pt,inner sep=0pt}]
\node (z) at (-0.87,0.83) [fill = white]  {$v_{k-\ell+3}$};
\node (z) at (0,1.15) [fill = white]  {$\ell-3$};
\node (z) at (0.87,0.76) [fill = white]  {$v_k$};
\node (z) at (0.28,-1.15) [fill = white]  {$v_j$};
\node (z) at (-0.18,-1.15) [fill = white]  {$v_{j+1}$};
\draw (0,0) circle [thick, radius=1];
\draw [very thick] (0.8,0.6) arc[start angle=36.9,delta angle=106.2,radius=1];
\draw [very thick] (-0.18,-0.98) arc[start angle=260,delta angle=25,radius=1];



\node (p2) at (0.8,0.6) [draw = black, fill = blue!30] {};
\node (qn) at (-0.8,0.6) [draw = black, fill = red!30] {};

\node (qj1) at (0.25,-0.97) [draw = black, fill = red!30] {};
\node (pj) at (-0.18,-0.98) [draw = black, fill = blue!30] {};








 \draw[dashed] (qj1) edge  (p2);
 \draw[dashed] (pj) edge  (qn);

\end{tikzpicture} 
\begin{tikzpicture}
[scale=2.2,auto=left,every node/.style={circle,fill=gray!30,minimum size = 6pt,inner sep=0pt}]
\node (z) at (-1.25,-0.8) [fill = white]  {$k-j-1$};
\node (z) at (1.1,-1.05) [fill = white]  {$j-k+\ell - 1$};
\node (z) at (1.2,-0.53) [fill = white]  {$v_{k - \ell + 1}$};
\node (z) at (-1.15,0) [fill = white]  {$v_k$};
\node (z) at (0.28,-1.15) [fill = white]  {$v_j$};
\node (z) at (-2,-1.15) [fill = white]  {};
\node (z) at (-0.18,-1.15) [fill = white]  {$v_{j+1}$};
\draw (0,0) circle [ radius=1];
\draw [very thick] (-1,0)  arc[start angle=180,delta angle=80,radius=1];
\draw [very thick] (0.866,-0.5)  arc[start angle=-30,delta angle=-45,radius=1];


\node (pn) at (-1,0) [draw = black, fill = blue!30]  {};

\node (qj1) at (0.25,-0.97) [draw = black, fill = red!30] {};
\node (pj) at (-0.18,-0.98) [draw = black, fill = blue!30] {};



\node (q2) at (0.866,-0.5) [draw = black, fill = red!30]  {};





 \draw[dashed] (qj1) edge  (pn);
 \draw[dashed] (pj) edge  (q2);

\end{tikzpicture} 
\caption{In both figures, the circle represents a partial transversal $C$ isomorphic to a cycle of length $2n - 2$ with an injective function $\phi$ satisfying $[2n] \setminus \im(\phi) = \{2n-1, 2n\}$. In both figures, if one dashed edge belongs to $E(G_{2n-1})$ and the other belongs to $E(G_{2n})$, then $\mathcal G$ contains a partial transversal isomorphic to a cycle of length $\ell$ containing the bolded edges and the dashed edges. The number of edges represented by each bolded arc, when more than one, is indicated.}
\label{figBondy}
\end{figure}
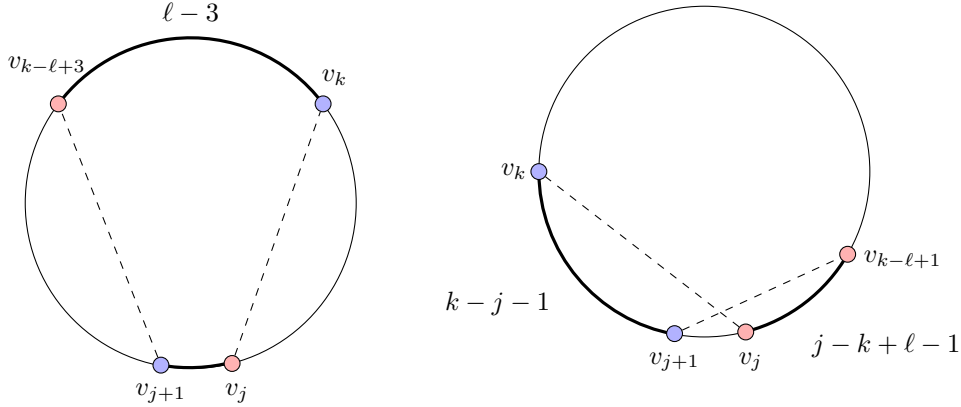

Using Claim \ref{claimAnyColor}, for any edge $v_i v_{i+1} \in E(C)$ for which $\phi(v_i v_{i+1}) = i$, we may redefine $\phi$ so that $\phi(v_i v_{i+1}) \in \{2n-1, 2n\}$. As a result, we may define our injective function $\phi$ so that $\phi$ ``misses" any given value $i \in [2n]$. This observation will help us prove our next claim, which shows that $xy$ is a \emph{wildcard edge}, where an edge $e$ is said this time to be a wildcard edge if $e \in E(G_i)$ for all $i \in [2n]$.

\begin{claim}
For every graph $G_i \in \mathcal{G}$, $xy \in E(G_i)$.
\label{claimAllColors}
\end{claim}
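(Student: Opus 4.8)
The plan is to show that for each $i\in[2n]$, the vertices $x$ and $y$ are joined in $G_i$, by supposing the contrary for some fixed $G_i$ and using the degree conditions on $x$ and $y$ together with the freedom — coming from Claim \ref{claimAnyColor} — to choose the injective function $\phi$ associated to $C$ so that it misses the index $i$. Thus I may assume $[2n]\setminus\im(\phi)=\{i,j\}$ for some other index $j$, and moreover that $xy\notin E(G_i)$.

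First I would record the structural fact about $C\cup\{x,y\}$: since $x$ is red it has at least $\frac n2+\frac12$ blue neighbours among the $n-1$ blue vertices of $C$ via $G_i$, and since $y$ is blue it has at least $\frac n2$ red neighbours among the $n-1$ red vertices of $C$ via $G_i$. The key observation is the standard ``crossing chords'' idea: if $v_a$ is a blue neighbour of $x$ via $G_i$ and $v_{a-1}$ (its red predecessor on $C$) is a red neighbour of $y$ via $G_i$, then $\mathcal G$ has a Hamiltonian transversal, namely follow $C$ from $v_a$ around to $v_{a-1}$, then $v_{a-1}y$, then $yx$ would be needed — but $xy\notin E(G_i)$, so instead I should pair $x$'s neighbours with $y$'s neighbours across a single edge of $C$ and use a \emph{third} missing index or the wildcard structure. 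More precisely, I would argue that if $x$ has a neighbour in $\{v_a,v_{a+1}\}$ via $G_i$ and $y$ has a neighbour in $\{v_a,v_{a+1}\}$ via $G_i$ (the two neighbours being distinct, necessarily one red and one blue), then deleting the edge $v_av_{a+1}$ from $C$ and inserting the path through $x$ and $y$ yields a partial transversal isomorphic to a cycle of length $2n$ — a Hamiltonian transversal — using the edge $xy$; but since $xy\notin E(G_i)$ I instead get a cycle of length $2n$ that would require $xy$. So the correct move is: such a configuration gives a Hamiltonian transversal provided the edge $xy$ can be supplied by $G_i$; since it cannot, I must instead show the configuration is impossible, i.e. that $N_{G_i}(x)\cap V(C)$ and $N_{G_i}(y)\cap V(C)$ are ``$C$-independent'' from each other, and then derive a contradiction by counting, exactly as in Case 1 of the proof of Theorem \ref{mainThm}: $N_{G_i}(x)\cap V(C)$ has at least $\frac n2+\frac12$ elements (all blue), so its $C$-neighbourhood (all red) has at least $\frac n2+\frac12$ elements, leaving at most $(n-1)-(\frac n2+\frac12)=\frac n2-\frac32$ red vertices of $C$ available for $N_{G_i}(y)\cap V(C)$, contradicting $|N_{G_i}(y)\cap V(C)|\ge\frac n2$.

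Hence no such $G_i$ exists, which is exactly the claim. I would also need the minor remark that this argument uses $C$ as a cycle of length $2n-2$ on $V(C)$ with $|V(C)|=2n-2$, so $V(C)$ has $n-1$ red and $n-1$ blue vertices, making the arithmetic above valid; and that $x,y\notin V(C)$, so the degree of $x$ into $V(C)$ is at least $d_{G_i}(x)-[xy\in E(G_i)]=d_{G_i}(x)>\frac n2$, and similarly $|N_{G_i}(y)\cap V(C)|\ge d_{G_i}(y)-0\ge\frac n2$ once we know $xy\notin E(G_i)$. The main obstacle is getting the pairing/insertion bookkeeping exactly right so that the inserted segment through $x$ and $y$ really does reuse each graph at most once — this is where Claim \ref{claimAnyColor} is essential, since it lets $\phi$ miss $i$ so that $G_i$ is free to host $xy$ in the hypothetical Hamiltonian transversal, and it simultaneously frees up the edge $v_av_{a+1}$ of $C$ to be recoloured with an index in $\{2n-1,2n\}$ if needed. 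Once that is set up, the counting contradiction is short and mirrors the earlier argument verbatim.
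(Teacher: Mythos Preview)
Your argument has a genuine gap: the contradiction you are chasing is the wrong one. At this point in the proof of Theorem~\ref{thmPancyclic} the standing assumption is that $\mathcal G$ has \emph{no} partial transversal isomorphic to a cycle of length~$\ell$, for one fixed even $\ell$ with $4\le\ell\le 2n-4$. Producing a Hamiltonian transversal (a cycle of length $2n$) is not a contradiction here --- indeed, Theorem~\ref{mainThm} already guarantees one exists, and it was used in Claim~\ref{lemma2n4}. So even if your ``crossing chords'' configuration gave a Hamiltonian transversal, nothing would follow; and since you never invoke~$\ell$, your counting cannot possibly rule out $xy\notin E(G_i)$. There is also a secondary bookkeeping problem: you take both $xv_a$ and $yv_{a+1}$ from the \emph{same} graph $G_i$, so they cannot both appear in a transversal, which means your ``configuration'' does not force anything even locally.

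The paper's proof proceeds quite differently and uses only the vertex $x$ (not $y$). Assuming $xy\notin E(G_{2n})$, all of $A:=N_{G_{2n}}(x)$ lies in $V(C)$ and has size at least $\frac{n}{2}+\frac12$. One then shifts along $C$ by $\ell-2$: the set $B:=A+(\ell-2)$ still has at least $\frac{n}{2}+\frac12$ blue vertices, so by pigeonhole some neighbour of $x$ via $G_{2n-1}$ lies in $B$. If $v_m\in A$ and $v_{m+\ell-2}\in N_{G_{2n-1}}(x)$, then
\[
(x,\,v_m,\,v_{m+1},\,\dots,\,v_{m+\ell-2},\,x)
\]
is a partial transversal of length exactly $\ell$ (the two chords use $G_{2n}$ and $G_{2n-1}$, the $\ell-2$ cycle edges use their original $\phi$-labels), contradicting the standing assumption. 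Claim~\ref{claimAnyColor} is used only to reduce to the indices $2n-1,2n$; the essential idea is the shift by $\ell-2$, which your proposal is missing entirely.
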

\begin{proof}
Recall that $x$ is a red vertex. First, we note that by Claim \ref{claimAnyColor}, given any graph $G_i \in \mathcal{G}$, we may find a partial transversal over $\mathcal{G}$ on $E(C)$ with an associated injective function $\psi$ such that $i \in [2n] \setminus \im(\psi)$. Therefore, it suffices to consider our original partial transversal $C$ and associated function $\phi$ and to show that $xy \in E(G_{2n-1})$ and $ xy \in E(G_{2n})$.

Suppose for the sake of contradiction that $xy \not \in E(G_{2n})$. Then all neighbors of $x$ via $G_{2n}$ belong to $V(C)$. Let $A$ denote the set of neighbors of $x$ via the graph $G_{2n}$. In particular, the set $B = A + (\ell-2)$ has at least $\frac{n}{2} + \frac{1}{2}$ elements. As $x$ has at least $\frac{n}{2} - \frac{1}{2}$ neighbors in $V(C)$ via the graph $G_{2n-1}$, and as at most $(n-1) - (\frac{n}{2} + \frac{1}{2}) =\frac{n}{2} - \frac{3}{2} $ blue vertices of $V(C)$ do not belong to $B$, it follows that some neighbor of $x$ via $G_{2n-1}$ belongs to $B$. Then it follows that $\mathcal{G}$ in fact contains a partial transversal isomorphic to a cycle of length $\ell$. Therefore, we may assume that $xy \in E(G_{2n})$, and by a similar argument, we may assume that $xy \in E(G_{2n-1})$.
\end{proof}

Claim \ref{claimAllColors} shows us that $xy$ is a wildcard edge in $\mathcal{G}$, which gives us the last piece that we need to finish our proof. Let $A \subseteq V(C)$ be the set of vertices in $C$ adjacent to $x$ via $G_{2n-1}$. Clearly $|A| \geq \frac{n}{2} - \frac{1}{2}$. Let $B = (A + (\ell-2)) \cup (A - (\ell-2))$. If $|B| \geq \frac{n}{2} + \frac{1}{2}$, then by the argument in Claim \ref{claimAllColors}, $x$ has a neighbor in $B$ via the graph $G_{2n}$, and then $\mathcal{G}$ contains a partial transversal isomorphic to a cycle of length $\ell$. Otherwise, $|A| = |B| \leq \frac{n}{2}$, and then by Lemma \ref{lemmaNum}, $A = A + ( 2\ell - 4 )$.

Next, let $B' = (A + (\ell - 3)) \cup (A - (\ell - 3))$. As $xy$ is an edge of all graphs in $\mathcal{G}$, if $y$ has a neighbor in $B'$ by $G_{2n}$, then $\mathcal{G}$ has a partial transversal isomorphic to a cycle of length $\ell$. If $|B'| \geq \frac{n}{2} + \frac{1}{2}$, then $y$ has a neighbor in $B'$ via $G_{2n}$, and we find our partial transversal in $\mathcal G$ of length $\ell$. Otherwise, $|A| = |B'| \leq \frac{n}{2}$, and by Lemma \ref{lemmaNum}, $A = A + ( 2l - 6)$. 

As $A = A + ( 2 \ell - 4 )$, and as $A = A + ( 2\ell - 6 )$, it follows that $A = A + 2 $. Then it follows that $|A| = |B| = n - 1$, and $x$ has a neighbor in $B$ via $G_{2n}$. Hence, in all cases, $\mathcal{G}$ contains a partial transversal isomorphic to a cycle of length $\ell$. This completes the proof. \qed

\section{Perfect matchings: Proof of Theorem \ref{thmPM}}
This section will be dedicated to the proof of Theorem \ref{thmPM}. In this section, we define $X$ as a set of $n$ blue vertices $\{p_1, \dots, p_n\}$ and $n$ red vertices $\{q_1, \dots, q_n\}$. This time, we assume $\mathcal{G} = \{G_1, \dots, G_n\}$ is a set of $n$ bipartite graphs on the vertex set $X$, such that for each $G_i \in \mathcal{G}$, the red vertices of $X$ make up one color class of $G_i$, and the blue vertices of $X$ make up the other color class of $G_i$. We will show that under the conditions of Theorem \ref{thmPM}, $\mathcal{G}$ contains a transversal isomorphic to a perfect matching.

Supposing that Theorem \ref{thmPM} does not hold, we consider an edge-maximal counterexample $\mathcal G$ to the theorem. That is, we let $\mathcal{G}$ be a family of graphs such that after adding any edge to any graph $G_i \in \mathcal{G}$, the resulting family contains a perfect matching transversal. If each graph $G_i \in \mathcal{G}$ is a complete bipartite graph, then $\mathcal{G}$ certainly contains a perfect matching transversal. Therefore, we may assume without loss of generality that $G_n$ is not a complete bipartite graph and that we may add some edge $e$ to $G_n$ with a red endpoint and a blue endpoint. As $\mathcal G$ is an edge-maximal counterexample, $\mathcal{G} + e = \{G_1, G_2, \dots, G_{n}+e\}$ contains a perfect matching transversal $M$ with an associated bijective function $\phi:E(M) \rightarrow [n]$. We may assume without loss of generality that $e = p_nq_n$, and that $M$ consists of edges $p_1q_1, p_2q_2, \dots, p_nq_n$, and that for each edge $p_iq_i$, $\phi(p_iq_i) = i$. It follows that $\mathcal{G}$ contains a partial transversal $p_1q_1, \dots, p_{n-1}q_{n-1}$ for which each edge $p_iq_i$  ($1 \leq i \leq n-1$) belongs to $E(G_i)$.

The remainder of the proof will rely heavily on ideas of Joos and Kim from \cite{Joos}. We define an auxiliary digraph $H$ for which $V(H) = X$. For each value $i$, $1 \leq i \leq n-1$, and for each edge $e \in E(G_i)$ incident to $q_i$, $e \neq p_i q_i$, we add an arc $e$ to $H$ directed away from $q_i$. We claim that in $H$, $d^-(p_n) \leq \frac{n}{2} - \frac{3}{2}$. Indeed, suppose that $d^-(p_n) \geq \frac{n}{2} - 1$. We have assumed that $p_nq_n \not \in E(G_n)$, and hence we may assume that $q_n$ has at least $\frac{n}{2} + \frac{1}{2}$ neighbors among $p_1, \dots, p_{n-1}$ via $G_n$. As $p_n$ has at least $\frac{n}{2} - 1$ in-neighbors in $H$ among $q_1, \dots, q_{n-1}$, and as $(\frac{n}{2} + \frac{1}{2}) + (\frac{n}{2} - 1) > n - 1$, it follows from the pigeonhole principle that there exists a pair $p_i, q_i$ such that $p_i q_n$ is an arc of $H$ and $p_i q_n \in E(G_n)$. This implies that a perfect matching transversal exists over $\mathcal G$ containing the edges $p_i q_n$ and $p_n q_i$. Therefore, we assume that $d^-(p_n) \leq \frac{n}{2} - \frac{3}{2}$.

We first consider the case that $n$ is even. In this case, the total number of arcs in $H$ not ending at $p_n$ is at least $(n-1)(\frac{n}{2}) - (\frac{n}{2} - 2)> (n-1)(\frac{n}{2} - 1)$. Therefore, there exists a vertex $p_i$, $1 \leq i \leq n-1$, for which $d^-(p_i) \geq \frac{n}{2}$. As $q_n$ has at least $\frac{n}{2}+1$ neighbors among $p_1, \dots, p_{n-1}$ via $G_n$, and as $p_n$ has at least $\frac{n}{2}-1$ neighbors among $q_1, \dots, q_{n-1}$ via $G_i$, there exists a pair $p_j, q_j$ for which $q_j p_n \in E(G_i)$ and $p_j q_n \in E(G_n)$. We may assume that $i \neq j$, as otherwise there exists a perfect matching transversal over $\mathcal G$ containing $p_j q_n$ and $q_j p_n$. Finally, as we may assume that $p_i q_i \not \in E(G_j)$, we see that $q_i$ has at least $\frac{n}{2}$ neighbors among $p_1, \dots, p_{i-1}, p_{i+1}, \dots, p_{n-1}$ via $G_j$, and $p_i$ has at least $\frac{n}{2}$ in-neighbors in $H$ among $q_1, \dots, q_{i-1}, q_{i+1}, \dots, q_{n-1}$. Therefore, we may choose a pair $p_k, q_k$, $k \neq j$ for which $p_i q_k$ is an arc of $H$ and $p_k q_i \in E(G_j)$. Then it follows that there exists a perfect matching transversal over $\mathcal G$ containing the edges $p_k q_i$ and $q_k p_i$, and the proof is complete.

Next, we consider the case that $n$ is odd. In this case, the total number of arcs in $H$ not ending at $p_n$ is at least $(n-1)(\frac{n}{2}-\frac{1}{2}) - (\frac{n}{2} - \frac{3}{2}) = \frac{1}{2}n^2 - \frac{3}{2}n + 2$. We further consider two cases. \\ \\
\textbf{Case 1:} There exists a vertex $q_i$, $1 \leq i \leq n-1$, with in-degree at least $\frac{n}{2} + \frac{1}{2}$. 

As $q_n$ has at least $\frac{n}{2} + \frac{1}{2}$ neighbors among $p_1, \dots, p_{n-1}$ via $G_n$, and as $p_n$ has at least $\frac{n}{2} - \frac{1}{2}$ neighbors among $q_1, \dots, q_{n-1}$ via $G_i$, there exists a pair $p_j, q_j$ for which $p_j q_n \in E(G_n)$ and $q_j p_n \in E(G_i)$. We may assume that $i \neq j$, as otherwise there exists a perfect matching transversal over $\mathcal G$ containing $p_j q_n$ and $q_j p_n$. Finally, as we may assume that $p_i q_i \not \in E(G_j)$, we see that $q_i$ has at least $\frac{n}{2} - \frac{1}{2}$ neighbors among $p_1, \dots, p_{i-1}, p_{i+1}, \dots, p_{n-1}$ via $G_j$, and $p_i$ has at least $\frac{n}{2} + \frac{1}{2}$ in-neighbors in $H$ among $q_1, \dots, q_{i-1}, q_{i+1}, \dots, q_{n-1}$. Therefore, we may choose a pair $p_k, q_k$, $k \neq j$ for which $ p_i q_k $ is an arc of $H$ and $p_k q_i  \in E(G_j)$. Then it follows that there exists a perfect matching transversal over $\mathcal G$ containing the edges $p_k q_i$ and $q_k p_i$, and the proof is complete. \\ \\
\textbf{Case 2:} There exists no vertex $p_i$, $1 \leq i \leq n-1$, with in-degree at least $\frac{n}{2} + \frac{1}{2}$. 

In this case, we let $a$ denote the number of vertices among $p_1, \dots, p_{n-1}$ with in-degree $\frac{n}{2} - \frac{1}{2}$. By counting the arcs of $H$ not ending at $p_n$, we see that 
$$a(\frac{n}{2} - \frac{1}{2}) + (n-1-a)(\frac{n}{2} - \frac{3}{2}) \geq \frac{1}{2}n^2 - \frac{3}{2}n + 2,$$ from which it follows that $a \geq \frac{n}{2} + \frac{1}{2}$. As $p_n$ has at least $\frac{n}{2} + \frac{1}{2}$ neighbors among $q_1, \dots, q_{n-1}$, there must exist a pair $p_j, q_j$ for which $ p_n q_j \in E(G_n)$ and $d^-(p_j) = \frac{n}{2} - \frac{1}{2}$. Finally, we may assume that $p_j q_n \not \in E(G_j)$, as otherwise, there would exist a perfect matching transversal over $\mathcal G$ containing $p_j q_n$ and $q_j p_n$. Therefore, as $p_j$ has $\frac{n}{2} - \frac{1}{2}$ in-neighbors among $q_1, \dots, q_{j-1}, q_{j+1}, \dots,  q_{n-1}$, and as $q_n$ has at least $\frac{n}{2} - \frac{1}{2}$ neighbors among $p_1, \dots, p_{j-1}, p_{j+1}, \dots, p_{n-1}$ via $G_j$, there must exist a pair $p_k, q_k$ for which $q_k p_j$ is an arc of $H$ and $p_k q_n \in E(G_j)$. Then it follows that there exists a perfect matching transversal over $\mathcal G$ containing the edges $p_j q_k$ and $p_k q_n$, and the proof is complete. \qed \\ 

We give a construction to show that the degree condition of Theorem \ref{thmPM} is in some sense best possible. First, we consider the case that $n$ is even, and we show that a minimum degree of $\frac{n}{2}$ for all vertices does not imply the existence of a perfect matching transversal. We partition our red vertices of $X$ into two equally sized parts $A,B$, and we partition our blue vertices of $X$ into two equally sized parts $C,D$. For $1 \leq i \leq n-1$, we let $G_i$ be the union of the complete graphs on the pairs $(A,C)$ and $(B,D)$. We let $G_n$ be the union of the complete graphs on the pairs $(A,D)$ and $(B,C)$. We let $\mathcal{G} = \{G_1, \dots, G_n\}$, and we see that for each $G_i \in \mathcal{G}$, $\delta(G_i) = \frac{n}{2}$. 

Suppose that $\mathcal{G}$ contains a perfect matching transversal $M$. One of the pairs $(A,C)$ or $(B,D)$ must contain $\frac{n}{2}$ edges of $M$ from the graphs $G_1, \dots, G_{n-1}$. This implies, however, that every edge of $E(G_n)$ must be incident to an edge $e$ from a graph $G_i$, $1 \leq i \leq n-1$, for which $e$ belongs to $M$. It follows that $M$ is not a perfect matching, giving a contradiction.

In the case that $n$ is odd, we may use a similar construction in which $|A| = |C| = \frac{n}{2} + \frac{1}{2}$ and $|B| = |D| = \frac{n}{2} - \frac{1}{2}$ to show that a minimum degree of $\frac{n}{2} - \frac{1}{2}$ is not sufficient to guarantee a perfect matching transversal. Thus the minimum degree condition of Theorem \ref{thmPM} is in a way best possible.

\section{Conclusion}
The condition that we give in Theorem \ref{mainThm} for a Hamiltonian transversal is a bipartite analogue to a result of Joos and Kim \cite{Joos}, which generalizes Dirac's theorem into the language of graph transversals. Similarly, the condition given in Theorem \ref{thmPancyclic} for bipancyclicity with respect to partial transversals is a bipartite analogue to a result of Cheng, Wang and Zhao \cite{Cheng}, which asymptotically generalizes a pancyclic version of Dirac's theorem into the language of transversals. It is well known, however, that Dirac's condition for both Hamiltonicity and pancyclicity can be extended in many different ways to more general conditions on vertex degree sums and degree sequences (see, for example, \cite{Ore}, \cite{Fan}, \cite{Bondy}). Similarly, for bipartite graphs, Theorems \ref{thmMoon} and \ref{thmSchmeichel} have extensions to more general conditions for vertex degree sums and degree sequences (see, for example, \cite{Moon}, \cite{Chvatal}, \cite{Hendry}). 

A question that remains open is whether these more general conditions for Hamiltonicity and (bi)pancyclicity can be translated into the language of graph transversals. For example, is there a theorem for graph transversals that directly generalizes Ore's theorem \cite{Ore} or Fan's condition \cite{Fan} for Hamiltonicity? While such theorems for graph transversals may exist, their proofs seem to be much more difficult than their traditional counterparts. 

\section{Acknowledgements}
I am grateful to Ladislav Stacho for making a suggestion to consider problems of pancyclicity in the setting of graph transversals and for carefully reading this paper. I am also grateful to Kevin Halasz for explaining key concepts and tools used in the study of graph transversals and for many helpful discussions. Additionally, I am grateful to Jaehoon Kim for pointing out the connection between Theorem \ref{thmPM} and the main result from \cite{AharoniRGraphs}, which ultimately led to a slight strengthening of the main three theorems of this paper. Finally, I am grateful to the referees who offered many helpful suggestions that improved the overall quality of this paper.

\raggedright
\bibliographystyle{abbrv}
\bibliography{MasterBib}

\end{document}